\documentclass{amsart}

\usepackage[english]{babel}
\usepackage{hyperref}
\usepackage{enumitem}
\usepackage{graphicx}
\usepackage{xcolor} % Colored comments
\usepackage{pgfplots}
\pgfplotsset{compat=1.18}

\newtheorem{theorem}{Theorem}[section]

\newtheorem{proposition}[theorem]{Proposition}
\newtheorem{corollary}[theorem]{Corollary}
\newtheorem{lemma}[theorem]{Lemma}
\newtheorem{example}[theorem]{Example}

\theoremstyle{remark}
\newtheorem{remark}[theorem]{Remark}

\numberwithin{equation}{section}

\begin{document}

%   TITLE
\title{Conic Pseudo-Finslerian Mechanical Systems}

%   FIRST AUTHOR

\author[B. O. Alves]{Benigno O. Alves}
\address{Benigno O. Alves \textup{(corresponding author)} \hfill\break\indent Instituto de Matem\'{a}tica e Estat\'{\i}stica, Universidade Federal da Bahia \hfill\break\indent  Rua Milton Santos, 40170-110 Salvador, Bahia, Brazil}
\email{benignoalves@ufba.br}

\thanks{The authors would like to thank Patricia Marçal and Miguel Angel Javaloyes for their valuable contributions to this work.}

%   SECOND AUTHOR 

%  SECOND AUTHOR
%  SECOND AUTHOR

\author[Glaene S. S. Mendonça]{Glaene S. S. Mendonça}
\address{Glaene S. S. Mendonça
\hfill\break\indent Faculdade de Ciências Exatas e Naturais -
\hfill\break\indent Departamento de Matemática e Estatística,
\hfill\break\indent Universidade do Estado do Rio Grande do Norte,
\hfill\break\indent Avenida Professor Antônio Campos, 2717, 59.625-620,
\hfill\break\indent Mossoró, Rio Grande do Norte, Brazil}
\email{glaenesantos@uern.br}

\thanks{This study was financed in part by the Coordena\c{c}\~ao de Aperfei\c{c}oamento de Pessoal de N\'{\i}vel Superior - Brasil (CAPES) - Finance Code 001.}

%   DATE
\date{\today}

%   KEYWORDS
\keywords{Conic pseudo-Finslerian metrics, mechanical systems}

%   ABSTRACT
\begin{abstract}
We develop a geometric framework for conservative mechanical systems modeled on conic pseudo-Finslerian manifolds, extending classical Riemannian and semi-Riemannian mechanics to anisotropic geometries. In this setting, we define a Jacobi-type pseudo-Finslerian metric and demonstrate that motions of fixed energy correspond, up to reparametrization, to its geodesics. We establish energy and momentum conservation under suitable symmetry assumptions via a generalized Noether’s theorem, and provide new analytical criteria for the completeness of both the Jacobi metric and the mechanical system. Furthermore, for mechanical systems with holonomic and non-holonomic constraints, we prove the existence and uniqueness of reaction forces satisfying d’Alembert’s principle. These results provide a unified geometric foundation for constrained and unconstrained anisotropic dynamics. 
\end{abstract}

%   SETUP
\maketitle

\section{Introduction}

A (pseudo-)Riemannian mechanical system is a triple $\mathcal{M} = (M,\mathcal{L},\mathcal{F})$, where $M$ is the configuration manifold, $\mathcal{L}(v_p) = \frac{1}{2}g_p(v,v)$ is the kinetic energy determined by a (pseudo-)Riemannian metric $g$, and $\mathcal{F} : TM \to TM$ is an external force. The system's trajectories satisfy Newton's equation $D_{\gamma'}\gamma' = \mathcal{F}(\gamma')$, where $D$ is the covariant derivative induced by $g$.

A fundamental case occurs when the system is conservative, i.e., $\mathcal{F}(v_p) = -\nabla \phi(p)$ for some potential function $\phi: M \to \mathbb{R}$. In such systems, trajectories are solutions of the Euler--Lagrange equations for $\widetilde{\mathcal{L}} = \mathcal{L} - \phi \circ \pi$, and the total mechanical energy $\mathcal{E} = \mathcal{L} + \phi \circ \pi$ is conserved, where $\pi : TM \to M$ is the canonical projection. Notably, the Maupertuis principle \cite{maupertuis1746}, refined by Jacobi, states that trajectories with fixed energy $e$ correspond, after reparametrization, to the geodesics of the Jacobi metric $g^J = (e - \phi)g$ defined on the region $M_e = \{p \in M \mid e - \phi(p) > 0\}$. This correspondence is a powerful tool in classical mechanics, providing a direct bridge between potential-driven dynamics and differential geometry \cite{godinho2012introduction,marsden2013introduction}.

The goal of this work is to extend this geometric formulation to mechanical systems in anisotropic media. In such media, physical properties --- and thus the fundamental tensor of kinetic energy --- depend not only on position but also on direction. This behavior cannot be modeled by a Riemannian metric, but it is naturally described within the framework of Finslerian geometry.

Finslerian geometry provides a powerful extension of Riemannian geometry, suitable for describing anisotropic phenomena across optics, thermodynamics, biomechanics, quantum field theory, and general relativity. Notable developments include the Zermelo navigation problem   \cite{bao2004zermelo,caponio2024wind,markvorsen2025time}, wind-type Finslerian structures \cite{caponio2024wind}, Lorentz--Finslerian spacetimes \cite{javaloyes2020definition}, applications to light propagation in birefringent media \cite{javaloyes2025generalized} and material anisotropy \cite{antonelli2005differential,clayton2023generalized,gibbons2011geometry,popov2022six}, robotics \cite{clayton2023generalized,ratliff2021generalized}, and frameworks for anisotropic wave propagation \cite{javaloyes2021applications} (e.g., the spread of wildfires or seismic wave propagation).

A conic pseudo-Finslerian mechanical system is a mechanical system whose kinetic energy $\mathcal{L} = \frac{1}{2}L$ is given by a conic pseudo-Finslerian metric $L$.
The existing literature on Finslerian mechanical systems, notably the works of Miron and collaborators \cite{bucataru1999nonlinear}, has primarily focused on the geometry of associated sprays, semisprays, and nonlinear connections.

Our central contribution lies in extending classical results for conservative mechanical systems to the conic pseudo-Finslerian setting. More precisely, a conic pseudo-Finslerian mechanical system is defined to be conservative if the external force is given by the anisotropic gradient $\mathcal{F}(v) = -\nabla^v\phi$, defined implicitly by $g_v(\nabla^v\phi, w) = d\phi(w)$ for all $w \in T_{\pi(v)}M$, where $g_v = \frac{1}{2}\mathrm{Hess}(L)(v)$ is the fundamental tensor of the metric $L$. We demonstrate that the trajectories of this system coincide with the Euler--Lagrange solutions for the modified Lagrangian $\widetilde{\mathcal{L}} = \frac{1}{2}L - \phi \circ \pi$ and, crucially, that the total mechanical energy $\mathcal{E} = \mathcal{L} + \phi \circ \pi$ is strictly conserved.

In \cite{maraner2019jacobi}, the author proved that low-energy solutions of a velocity-analytic Lagrangian coincide, up to reparametrization, with the geodesics of a Finslerian metric acting as a Jacobi metric. However, this result does not apply to the Lagrangian $\widetilde{\mathcal{L}} = \frac{1}{2}L - \phi \circ \pi$ when $L$ is a conic pseudo-Finslerian metric, since a general Finslerian metric fails to be of class $C^2$ on the zero section unless it reduces to a Riemannian metric.

The main result of this paper is the generalization of the Jacobi--Maupertuis principle to this setting. We introduce the Jacobi--Finslerian metric associated with the energy level $e$ as $L^J = (e - \phi)L$, defined on $M_e$. In Theorem~\ref{Jacobitheorem}, we prove that the geodesics of $L^J$ correspond, up to reparametrization, to the motions of the conservative conic pseudo-Finslerian mechanical system $\mathcal{M} = (M, \mathcal{L}, -\nabla^v\phi)$ with constant energy $e$.

This formulation provides a unified geometric foundation for anisotropic dynamics. Furthermore, we establish a momentum conservation theorem (Theorem~\ref{MomentumConservation}), study the completeness of the Jacobi metric and the mechanical system as well as dissipativity, and formulate the dynamics for holonomic and non-holonomic constraints, demonstrating the existence and uniqueness of reaction forces (Theorems~\ref{holonomiccontrains} and \ref{noholonomiccontrains}).

\section{Lagrangian Mechanics}\label{lagrangian}

In this section, we briefly recall the main concepts of Lagrangian mechanics. Let $M$ be a smooth manifold and $\pi: TM \to M$ the canonical projection. 
A curve $\gamma$  is \textbf{$\mathcal U$-admissible} if $\gamma'\in \mathcal U$. A \textbf{Lagrangian} on $M$ is a smooth function $\mathcal{L}: \mathcal{U} \subset TM \to \mathbb{R}$ defined on an open subset $\mathcal{U}$ of the tangent bundle. 
A  $\mathcal U$-admissible smooth curve $\gamma: I \to M$ is called an \textbf{$\mathcal{L}$-geodesic} (or \textbf{$\mathcal{L}$-motion}) if it is a critical point of the action functional
\[
A_{\mathcal{L}}(\gamma) = \int_I \mathcal{L}(\gamma'(t))\, dt.
\]

The \textbf{Legendre transformation} of $\mathcal{L}$ is the fiber derivative
$\ell = \ell^{\mathcal{L}} : \mathcal{U} \to T^*M,$ \[ 
\ell^{\mathcal{L}}(v)(w) = \left.\frac{d}{dt}\right|_{t=0} \mathcal{L}(v + t w),
\]
for all $v\in \mathcal{U}$ and $w\in T_{\pi(v)}M$.
The associated \textbf{energy function} (or \textbf{mechanical energy}) is
\[
\mathcal{E}(v) = \mathcal{E}^{\mathcal{L}}(v) = \ell^{\mathcal{L}}(v)(v) - \mathcal{L}(v),
\]
for each  $v \in \mathcal{U}$.

\begin{theorem}[Energy Conservation]\label{conservacaodaEnergia}
If $\mathcal L$ is a Lagrangian and $\gamma$ is an $\mathcal{L}$-geodesic, then $\mathcal{E}^{\mathcal{L}}(\gamma'(t))$ is constant along $\gamma$.
\end{theorem}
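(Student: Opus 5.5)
The plan is to work in local coordinates and derive the Euler--Lagrange equations for an $\mathcal L$-geodesic, then differentiate the energy along the curve and show the derivative vanishes. Since being a critical point of $A_{\mathcal L}$ is a local property, I will restrict to a subinterval $J\subset I$ small enough that $\gamma(J)$ lies in a coordinate chart $(x^1,\dots,x^n)$, with induced coordinates $(x,y)$ on $TM$. On $J$, any compactly supported variation of $\gamma|_J$ extends by $\gamma$ to a variation of $\gamma$. Because $\mathcal U$ is open, such variations remain $\mathcal U$-admissible for small parameter. The standard computation with integration by parts then gives the Euler--Lagrange equations
\[
\frac{d}{dt}\frac{\partial \mathcal L}{\partial y^i}(\gamma,\gamma') - \frac{\partial \mathcal L}{\partial x^i}(\gamma,\gamma') = 0, \qquad i=1,\dots,n.
\]
The only input needed is the fundamental lemma of the calculus of variations.

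Next I express the energy in these coordinates. By definition of the fiber derivative, $\ell^{\mathcal L}(v)(w)=\frac{\partial\mathcal L}{\partial y^i}(x,y)\,w^i$. Hence
\[
\mathcal E(\gamma') = \frac{\partial \mathcal L}{\partial y^i}(\gamma,\gamma')\,\dot\gamma^i - \mathcal L(\gamma,\gamma').
\]
Differentiating in $t$ with the product and chain rules gives
\[
\frac{d}{dt}\frac{\partial \mathcal L}{\partial y^i}\,\dot\gamma^i + \frac{\partial \mathcal L}{\partial y^i}\ddot\gamma^i - \frac{\partial \mathcal L}{\partial x^i}\dot\gamma^i - \frac{\partial \mathcal L}{\partial y^i}\ddot\gamma^i .
\]
The second-derivative terms cancel. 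What remains is $\bigl(\frac{d}{dt}\frac{\partial \mathcal L}{\partial y^i}-\frac{\partial \mathcal L}{\partial x^i}\bigr)\dot\gamma^i$, which is zero by the Euler--Lagrange equations. So $\frac{d}{dt}\mathcal E(\gamma'(t))=0$ on each such $J$.

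Finally, I pass from local to global. The intervals $J$ cover $I$, and the function $t\mapsto\mathcal E(\gamma'(t))$ is smooth on $I$ with zero derivative on each $J$. Its derivative therefore vanishes identically, and since $I$ is connected, $\mathcal E(\gamma'(t))$ is constant.

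The main obstacle is justifying the Euler--Lagrange equations when $\mathcal L$ is only defined on the open set $\mathcal U$; there are two delicate points. First, admissibility of the variations, which requires shrinking the variation parameter using compactness of the support. Second, the meaning of ``critical point'' when $I$ is noncompact, where the action may diverge. I will handle the second by reading criticality in the usual sense: the first variation of the action restricted to compact subintervals vanishes for compactly supported variations. This is also exactly what makes the local argument legitimate. The rest is routine calculus.
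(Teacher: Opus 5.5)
Your proof is correct. The paper gives no proof of this theorem: it is quoted in Section~\ref{lagrangian} as standard background, so there is no competing argument to compare against.

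What you give is the classical derivation, and it is complete. You obtain the Euler--Lagrange equations locally, then compute $\frac{d}{dt}\mathcal E(\gamma') = \bigl(\frac{d}{dt}\frac{\partial\mathcal L}{\partial y^i}-\frac{\partial\mathcal L}{\partial x^i}\bigr)\dot\gamma^i$.

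Two features are worth noting:
\begin{itemize}
\item Your argument uses only the divergence form of the Euler--Lagrange equations and never inverts $g_{ij}$. It therefore needs no regularity of $\mathcal L$, which matches the generality of the statement; Theorem~\ref{existencUnicit}, by contrast, assumes regularity.
\item Your reading of ``critical point'' as vanishing first variation under compactly supported variations on compact subintervals is consistent with the fixed-endpoint variations the paper itself uses in the proof of Lemma~\ref{LemaLagrangeano}.
\end{itemize}

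One minor detail: if $I$ contains an endpoint, the Euler--Lagrange equations come out on the interior of the corresponding $J$. The vanishing of $\frac{d}{dt}\mathcal E(\gamma')$ then extends to that endpoint by continuity.
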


\begin{lemma}\label{L-phi} 
Let $\mathcal L$ be a Lagrangian and  $\phi : M \to \mathbb{R}$ be a smooth function. If  $\tilde{\mathcal{L}} = \mathcal{L} - \phi \circ \pi$, then  $\ell^{\tilde{\mathcal{L}}} = \ell^{\mathcal{L}}$ and $\mathcal{E}^{\tilde{\mathcal{L}}} = \mathcal{E}^{\mathcal{L}} + \phi \circ \pi$.
\end{lemma}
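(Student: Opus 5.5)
The plan is to compute directly from the definitions of the Legendre transformation and of the energy function given just before Theorem~\ref{conservacaodaEnergia}. Both identities are purely fiberwise statements, so I will fix $v\in\mathcal U$ with $p=\pi(v)$ and $w\in T_pM$ throughout. I note at the outset that $\tilde{\mathcal L}$ is defined on the same open set $\mathcal U$, and that it is smooth there since $\phi\circ\pi$ is smooth.

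For the first identity, the key observation is that the term $\phi\circ\pi$ is constant along fibers. The curve $t\mapsto v+tw$ stays in $T_pM$, so $\pi(v+tw)=p$ for all small $t$. Hence
\[
\tilde{\mathcal L}(v+tw)=\mathcal L(v+tw)-\phi(p),
\]
and differentiating at $t=0$ kills the constant term. This gives $\ell^{\tilde{\mathcal L}}(v)(w)=\ell^{\mathcal L}(v)(w)$ for all $w$, that is, $\ell^{\tilde{\mathcal L}}=\ell^{\mathcal L}$. The only point needing care is that $v+tw\in\mathcal U$ for $|t|$ small, which holds because $\mathcal U$ is open.

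For the second identity, I substitute the first one into the definition of the energy:
\[
\mathcal E^{\tilde{\mathcal L}}(v)=\ell^{\tilde{\mathcal L}}(v)(v)-\tilde{\mathcal L}(v)=\ell^{\mathcal L}(v)(v)-\mathcal L(v)+\phi(\pi(v))=\mathcal E^{\mathcal L}(v)+(\phi\circ\pi)(v).
\]

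There is no real obstacle here. The one step worth stating explicitly is the fiber-constancy of $\phi\circ\pi$ used in the first identity.
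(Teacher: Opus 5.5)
Your proof is correct: because $\phi\circ\pi$ is constant on each fiber, it does not affect the derivative, so $\ell^{\tilde{\mathcal L}}=\ell^{\mathcal L}$; substituting this into the definition of the energy then gives $\mathcal E^{\tilde{\mathcal L}}=\mathcal E^{\mathcal L}+\phi\circ\pi$. The paper states this lemma as a standard recalled fact without giving a proof, and your direct computation from the definitions is the natural argument it leaves implicit.
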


The following result is a version of Noether’s theorem for invariance under the flow of a vector field, not necessarily a Lie group action.

\begin{lemma}[Noether]\label{Noether}
Let $\mathcal{L}: \mathcal U \to \mathbb{R}$ be a Lagrangian invariant under the flow of a vector field $V \in \mathfrak{X}(M)$. 
Then for every $\mathcal{L}$-geodesic $\gamma$, $\ell^{\mathcal{L}}(\gamma')(V \circ \gamma)$ is constant along $\gamma$.
\end{lemma}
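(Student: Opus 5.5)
The plan is to use the flow invariance to build a one-parameter family of admissible variations of $\gamma$, and then read off the constancy of the momentum from the first variation formula. Let $\varphi_s$ denote the flow of $V$. Invariance means $\mathcal{L}(d\varphi_s(v)) = \mathcal{L}(v)$ for all $v\in\mathcal U$ and all $s$ for which this is defined, with $d\varphi_s(\mathcal U)\subset\mathcal U$ locally.

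Fix a compact subinterval $[a,b]\subset I$. Set $\gamma_s(t)=\varphi_s(\gamma(t))$. For $|s|$ small this is defined on $[a,b]$, it is $\mathcal U$-admissible, and its variational field is $V\circ\gamma$. Since $\gamma_s'(t)=d\varphi_s(\gamma'(t))$, invariance gives
\[
A_{\mathcal L}(\gamma_s|_{[a,b]})=\int_a^b \mathcal L(\gamma'(t))\,dt ,
\]
which does not depend on $s$. Hence
\[
\frac{d}{ds}\Big|_{s=0} A_{\mathcal L}(\gamma_s|_{[a,b]})=0 .
\]

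On the other hand, I will compute this derivative by the standard first variation formula for a variation whose endpoints are not fixed. In local coordinates, differentiate under the integral and integrate by parts:
\[
\frac{d}{ds}\Big|_{s=0}A=\int_a^b\Big(\frac{\partial\mathcal L}{\partial x^i}-\frac{d}{dt}\frac{\partial \mathcal L}{\partial v^i}\Big)W^i\,dt+\Big[\frac{\partial\mathcal L}{\partial v^i}(\gamma')W^i\Big]_a^b ,
\]
where $W=V\circ\gamma$. The boundary term is $\big[\ell^{\mathcal L}(\gamma')(W)\big]_a^b$, and this expression is intrinsic. Because $\gamma$ is a critical point of the action, it satisfies the Euler--Lagrange equations, so the integral term vanishes. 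If $[a,b]$ does not fit in a single chart, I will subdivide it; the interior boundary terms then cancel because $\ell^{\mathcal L}(\gamma')(W)$ is coordinate independent.

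Combining the two computations gives $\ell^{\mathcal L}(\gamma'(b))(V(\gamma(b)))=\ell^{\mathcal L}(\gamma'(a))(V(\gamma(a)))$. Since $a<b$ are arbitrary in $I$, the function $\ell^{\mathcal L}(\gamma')(V\circ\gamma)$ is constant along $\gamma$.

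The main obstacle is the step that passes from ``critical point of the action'' to the Euler--Lagrange equations, so that the interior term vanishes for a variation that does not fix the endpoints. I will handle it by the usual fundamental lemma argument applied to compactly supported variations, which gives the Euler--Lagrange equations. The only other technical point is that $\varphi_s$ is defined near the compact set $\gamma([a,b])$ for small $s$. This holds by compactness, and the openness of $\mathcal U$ keeps $\gamma_s'$ inside $\mathcal U$.
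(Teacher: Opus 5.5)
Your proof is correct. The symmetry variation $\gamma_s=\varphi_s\circ\gamma$ leaves the action on $[a,b]$ unchanged. The Euler--Lagrange equations kill the interior term of the free-endpoint first variation, so what remains is $\bigl[\ell^{\mathcal L}(\gamma')(V\circ\gamma)\bigr]_a^b=0$. Your compactness and openness remarks are exactly the routine points to check: the flow is defined near $\gamma([a,b])$, $\gamma_s'\in\mathcal U$, and, for small $|s|$, each piece of $\gamma_s$ stays in its chart.

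The paper gives no proof of this lemma; it recalls it in the preliminaries as the standard flow version of Noether's theorem, so there is no route to compare against.

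A shorter variant uses the fact that invariance holds pointwise in $t$, not only after integrating. Differentiating $\mathcal L(d\varphi_s(\gamma'(t)))=\mathcal L(\gamma'(t))$ at $s=0$ gives, with $W=V\circ\gamma$,
\[
\frac{\partial\mathcal L}{\partial x^i}(\gamma')\,W^i+\frac{\partial\mathcal L}{\partial v^i}(\gamma')\,\dot W^i=0 .
\]
By the Euler--Lagrange equations the left-hand side equals $\frac{d}{dt}\bigl(\frac{\partial\mathcal L}{\partial v^i}(\gamma')W^i\bigr)=\frac{d}{dt}\,\ell^{\mathcal L}(\gamma')(W)$. This argument is local in $t$, so it needs no compact subintervals, free-endpoint variation formula or chart subdivision. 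Your integrated version, in exchange, shows directly why a critical point of the action conserves the boundary term produced by a symmetry variation.
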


The \textbf{fundamental tensor} of a Lagrangian $\mathcal{L}: \mathcal{U} \to \mathbb{R}$ is the map assigning to each $v \in \mathcal{U}$ the symmetric bilinear form
\[
g_v^{\mathcal{L}}(u,w) 
= \operatorname{Hess}\mathcal{L}_v(u,w)
= \left.\frac{\partial^2}{\partial t\, \partial s}\right|_{t=s=0} 
\mathcal{L}(v + t u + s w),
\]
for each $u, w \in T_{\pi(v)}M$.

\begin{lemma}\label{proptfund1}
Let  $\mathcal{U} \subset TM \setminus 0$ be an open subset invariant under positive scalar multiplication and let$ \mathcal L$ be a Lagrangian, of class $C^2$ on $\mathcal U$, which is positively $2$-homogeneous. Then its fundamental tensor satisfies the following properties:
\begin{enumerate}\label{proptfundamenteal1}
    \item[(i)] $g_v$ is positively homogeneous of degree 0, that is, $g_{\lambda v}=g_{v}$ for any $\lambda >0;$
    \item[(ii)] $g_v(v,v)=2\mathcal L(v);$
    \item[(iii)] $g_v(v,w)=\ell_v(w)=d\mathcal L_v(w)$ for any $w\in T_{\pi(v)}M.$
\end{enumerate}
\end{lemma}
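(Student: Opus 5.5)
The plan is to derive all three properties from the homogeneity identity $\mathcal L(\lambda v)=\lambda^2\mathcal L(v)$, valid for $\lambda>0$ and $v\in\mathcal U$, by differentiating it in the appropriate variables. Throughout we work fiberwise: fix $p\in M$ and regard $\mathcal L$ as a $C^2$ function on the open cone $\mathcal U_p=\mathcal U\cap T_pM$ of the vector space $T_pM$. Since $\mathcal U$ is open, $v+tu+sw\in\mathcal U_p$ for small $t,s$, so all the derivatives in the definitions make sense.

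For (i), we differentiate twice. Fix $\lambda>0$ and $u,w$. Because $\mathcal U_p$ is a cone, $\lambda(v+tu+sw)=\lambda v+t\lambda u+s\lambda w$ lies in $\mathcal U_p$. Homogeneity then gives
\[
\mathcal L(\lambda v+t\lambda u+s\lambda w)=\lambda^2\mathcal L(v+tu+sw).
\]
Applying $\partial_t\partial_s$ at $t=s=0$ gives $g_{\lambda v}(\lambda u,\lambda w)=\lambda^2 g_v(u,w)$. By bilinearity the left side equals $\lambda^2 g_{\lambda v}(u,w)$, and so $g_{\lambda v}=g_v$.

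For (iii), we differentiate once. First we note that $\ell_v(w)=d\mathcal L_v(w)$ is simply the definition of the fiber derivative, read as the differential of the restriction of $\mathcal L$ to $T_pM$. For the other identity, we apply Euler's relation to the $1$-homogeneous function $v\mapsto d\mathcal L_v(w)$. To see that it is $1$-homogeneous, differentiate $\mathcal L(\lambda(v+sw))=\lambda^2\mathcal L(v+sw)$ in $s$ at $s=0$. This gives $\lambda\, d\mathcal L_{\lambda v}(w)=\lambda^2 d\mathcal L_v(w)$, that is, $d\mathcal L_{\lambda v}(w)=\lambda\, d\mathcal L_v(w)$. Now differentiate this in $\lambda$ at $\lambda=1$. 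The right-hand side yields $d\mathcal L_v(w)$. The left-hand side yields the derivative of $\lambda\mapsto d\mathcal L_{\lambda v}(w)$ at $\lambda=1$, which is the second derivative of $\mathcal L$ at $v$ in the directions $v$ and $w$, namely $g_v(v,w)$. Hence $g_v(v,w)=d\mathcal L_v(w)=\ell_v(w)$.

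For (ii), we take $w=v$ in (iii) to get $g_v(v,v)=d\mathcal L_v(v)$. Euler's relation for $\mathcal L$ itself then finishes the argument: differentiating $\mathcal L(\lambda v)=\lambda^2\mathcal L(v)$ at $\lambda=1$ gives $d\mathcal L_v(v)=2\mathcal L(v)$.

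The only step that needs any care is the identification used in (iii): the derivative in $\lambda$ of $d\mathcal L_{\lambda v}(w)$ at $\lambda=1$ must equal the mixed second derivative $g_v(v,w)$. This requires exchanging the order of the $\lambda$- and $s$-derivatives. That exchange is justified by the $C^2$ hypothesis on $\mathcal U$ (Schwarz's theorem), and the cone property of $\mathcal U$ keeps the rays $\lambda v$ inside the domain, so that homogeneity can be used at all.
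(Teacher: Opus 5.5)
Your proof is correct. Differentiating $\mathcal L(\lambda(v+tu+sw))=\lambda^2\mathcal L(v+tu+sw)$ gives (i); differentiating once in $w$ and then in $\lambda$ at $\lambda=1$ gives (iii); Euler's relation then gives (ii), with the cone property and the $C^2$ hypothesis invoked where needed. The paper states this lemma without proof as recalled background, and your argument is the standard one it implicitly relies on.
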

We say that $\mathcal{L}$ is \textbf{regular} if $g_v^{\mathcal{L}}$ is nondegenerate for all $v \in \mathcal{U}$.

\begin{theorem}\label{existencUnicit}
Let $\mathcal{L}$ be a regular Lagrangian. 
Then a smooth curve $\gamma = (x_1, \ldots, x_n)$ is an $\mathcal{L}$-geodesic if and only if it satisfies
\begin{equation}\label{equivaleeqEL}
\frac{d^2 x_i}{dt^2} 
+ \sum_j g^{ij}(v) 
\left(
  \frac{\partial^2 \mathcal{L}}{\partial v_j \partial x_i} \frac{d x_i}{dt} 
  - \frac{\partial \mathcal{L}}{\partial x_j}
\right) = 0,
\end{equation}
where $g^{ij}(v)$ denotes the inverse matrix of 
$g_{ij}(v) = g_v^{\mathcal{L}}\!\left(\frac{\partial}{\partial x_i}, \frac{\partial}{\partial x_j}\right)$.
In particular, for each $v \in \mathcal{U}$ there exists a unique maximal $\mathcal{L}$-geodesic $\gamma_v : I_v \to M$ such that $\gamma_v'(0) = v$ and $0 \in I_v$.
\end{theorem}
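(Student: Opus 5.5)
The plan is to pass through the Euler--Lagrange equations in local coordinates. First, I will show that a $\mathcal U$-admissible curve $\gamma$ is a critical point of $A_{\mathcal L}$ (for variations with fixed endpoints on compact subintervals) exactly when, in any chart $(x_1,\dots,x_n,v_1,\dots,v_n)$ of $TM$, it satisfies
\[
\frac{d}{dt}\left(\frac{\partial \mathcal L}{\partial v_j}(\gamma'(t))\right) - \frac{\partial \mathcal L}{\partial x_j}(\gamma'(t)) = 0, \qquad j=1,\dots,n.
\]
This is the standard first-variation argument. Openness of $\mathcal U$ ensures that small variations $\gamma_s$ remain admissible. I then differentiate under the integral sign and integrate by parts, where the boundary terms vanish. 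The fundamental lemma of the calculus of variations yields the equations, applied chartwise using variations supported in a single chart.

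Second, I expand the total derivative with the chain rule:
\[
\frac{d}{dt}\frac{\partial \mathcal L}{\partial v_j} = \sum_k \frac{\partial^2 \mathcal L}{\partial x_k\,\partial v_j}\frac{dx_k}{dt} + \sum_k \frac{\partial^2 \mathcal L}{\partial v_k\,\partial v_j}\frac{d^2x_k}{dt^2}.
\]
By definition of the fundamental tensor, $\partial^2\mathcal L/\partial v_k\partial v_j (v)= g_{kj}(v)$. Regularity means this matrix is invertible at every $v\in\mathcal U$. I multiply the Euler--Lagrange system by $g^{ij}(\gamma')$ and sum over $j$. This gives \eqref{equivaleeqEL}, with the summation in the mixed term understood over the repeated index $k$ (the displayed index there is a typo). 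Conversely, multiplying \eqref{equivaleeqEL} by $g_{ij}$ recovers the Euler--Lagrange equations, so the two systems are equivalent.

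Finally, \eqref{equivaleeqEL} is a second-order ODE $x'' = F(x,x')$ with $F$ smooth on the open set $\mathcal U$. Smoothness holds because $v\mapsto g^{ij}(v)$ is smooth, by Cramer's rule and regularity. Picard--Lindelöf therefore gives local existence and uniqueness for the initial data $(x(0),x'(0))=v$. Equivalently, it defines a smooth vector field (a second-order vector field) on $\mathcal U\subset TM$, whose integral curves project to $\mathcal L$-geodesics.

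The main obstacle is globalizing to a unique maximal solution. Chart-independence follows because being an $\mathcal L$-geodesic is intrinsically defined. I will argue that any two $\mathcal L$-geodesics with the same initial velocity agree on the intersection of their domains. The set where their velocities coincide is nonempty and closed by continuity, and it is open by local uniqueness in charts. Since the intersection of domains is connected, the two geodesics agree on all of it. The union of all such solutions is then the unique maximal geodesic $\gamma_v:I_v\to M$.
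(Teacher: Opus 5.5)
Your proposal is correct. The paper states this theorem as standard background in Section~\ref{lagrangian} and gives no proof; your argument (first variation in charts, chain rule plus inversion of $g_{ij}$ via regularity, Picard--Lindel\"of, and an open--closed gluing argument for the maximal geodesic) is the standard proof it implicitly relies on, and you are right that the repeated index $i$ in the mixed term of \eqref{equivaleeqEL} is a typo for a summed index $k$.
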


\section{Conic Pseudo-Finslerian Geometry}\label{Finsler}

In this section we summarize the concepts and results from the theory of conic pseudo-Finslerian manifolds that will be required in this work, mainly following \cite{javaloyes2014chern, javaloyes2021anisotropic}.

\subsection{ Conic pseudo-Finslerian metric}

Let $\mathcal{U} \subset TM \setminus 0$ be an open subset invariant under positive scalar multiplication. A smooth function $L: \mathcal{U} \to \mathbb{R}$ is called a \textbf{conic pseudo-Finslerian metric} if it satisfies:
\begin{enumerate}
    \item $L(\lambda v) = \lambda^2 L(v)$ for all $v \in \mathcal{U}$ and $\lambda > 0$;
    \item the \textbf{kinetic energy} $\mathcal{L} = \tfrac{1}{2} L$ is regular.
\end{enumerate} 
When $\mathcal{U} = TM \setminus \{0\}$, we simply say that $L$ is a \textbf{pseudo-Finslerian metric}. 

If moreover $g^{\mathcal L}_v$ is positive definite for all $v \in \mathcal{U}$, $L$ is a \textbf{conic Finslerian metric}. If, additionally, $\mathcal U=TM\setminus \{0\}$, then $L$ is a \textbf{Finslerian metric}. It is common to work with the positively homogeneous function of degree one $F$ satisfying $F^2 = L$, in which case $F$ is also referred to as a Finslerian metric.

\begin{lemma}\label{lema01}If $L$ is a conic pseudo-Finslerian metric on $M$, then 
$\ell^{\mathcal L}(v)(v)=L(v)$ and $\mathcal E^L(v)=\mathcal L(v)$ for each $v\in \mathcal U$. 
\end{lemma}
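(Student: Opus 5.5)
The plan is to apply Lemma~\ref{proptfund1} to the kinetic energy $\mathcal L=\tfrac12 L$. This Lagrangian is smooth on $\mathcal U$, and $\mathcal U\subset TM\setminus 0$ is invariant under positive scalars. Since $L(\lambda v)=\lambda^2L(v)$, we also have $\mathcal L(\lambda v)=\lambda^2\mathcal L(v)$, so $\mathcal L$ is positively $2$-homogeneous and the hypotheses of Lemma~\ref{proptfund1} hold.

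First, by Lemma~\ref{proptfund1}(iii) with $w=v$, we get $\ell^{\mathcal L}(v)(v)=g_v(v,v)$. By Lemma~\ref{proptfund1}(ii), $g_v(v,v)=2\mathcal L(v)=L(v)$. Alternatively, one can argue directly: differentiate $\mathcal L((1+t)v)=(1+t)^2\mathcal L(v)$ at $t=0$, which is legitimate because $(1+t)v\in\mathcal U$ for $|t|<1$. This gives $\ell^{\mathcal L}(v)(v)=2\mathcal L(v)=L(v)$. This Euler-relation computation is the only substantive step, and it needs only conic invariance of $\mathcal U$.

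Second, from the definition of the energy function, $\mathcal E^{\mathcal L}(v)=\ell^{\mathcal L}(v)(v)-\mathcal L(v)=2\mathcal L(v)-\mathcal L(v)=\mathcal L(v)$. The notation $\mathcal E^L$ in the statement is read as the energy of the kinetic Lagrangian $\mathcal L$ associated with $L$.

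The one point of care is making sure the derivative along the ray $t\mapsto(1+t)v$ stays inside $\mathcal U$, which conic invariance ensures. Nothing else is expected to be an obstacle.
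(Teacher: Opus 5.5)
Your proof is correct and follows the intended argument. The paper gives no separate proof: the lemma is meant to follow from Lemma~\ref{proptfund1}(ii)--(iii) (Euler's relation for the positively $2$-homogeneous $\mathcal L=\tfrac12 L$) together with the definition $\mathcal E^{\mathcal L}(v)=\ell^{\mathcal L}(v)(v)-\mathcal L(v)$, and your reading of $\mathcal E^L$ as $\mathcal E^{\mathcal L}$ matches how the lemma is used in Lemma~\ref{LemaLagrangeano}.
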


The \textbf{fundamental tensor} of a conic pseudo-Finslerian metric $L$ is the fundamental tensor of $\mathcal{L}=\frac{1}{2}L$, which will be denoted by $g^L$ or simply by $g$ when there is no risk of confusion. Moreover, we can verify that $g_v(v,v)=L(v)$ for all $v\in \mathcal U$.
Since $L$ is positively homogeneous of degree 2, it follows that
$g_{\lambda v}=g_v$
for any $\lambda>0$ and $v\in \mathcal U$.

We say that a subspace \( D \subset T_pM \) is \textbf{nondegenerate} if 
\(\mathcal{U} \cap D \neq \emptyset\) and, for every \(v \in \mathcal{U} \cap D\) 
the fundamental tensor \(g_v\) restricted to \(D_{\pi(v)}\) is nondegenerate.  
In this case, it follows that the restriction of \(g_v\) to the \textbf{$g_v$-orthogonal complement} 
$$D_v^{\perp}=\{ w\in T_pM ; g_v(w,u)=0 \textit{\ for all\ } u\in D\}$$ is also nondegenerate.  A submanifold \(N \subset M\) is called a \textbf{conic pseudo-Finslerian submanifold} of \((M,L)\) 
if \(T_pN\) is a nondegenerate subspace of \(T_pM\) for every \(p \in N\).  
Consequently, the restriction of \(L : \mathcal{U} \to \mathbb{R}\) to 
\(\mathcal{U} \cap TN\) defines a conic pseudo-Finslerian metric
\[
\widehat{L} = L|_{TN} : \mathcal{U} \cap TN \longrightarrow \mathbb{R}.
\]
It is straightforward to verify that the fundamental tensor of \(\widehat{L}\) 
coincides with the restriction of the fundamental tensor of \(L\) to \(TN\):
\[
g^{\hat L}_v(u,w) = g^L_v(u,w)
\qquad \] for all $
v \in \mathcal{U} \cap TN, $ and $u,w \in T_{\pi(v)}N.$

An \textbf{isometry} of $(M,L)$ is a diffeomorphism $f$ such that $L(df)=L$. A vector field $V$ is an \textbf{$L$-Killing vector field} if its  flow acts by local $L$-isometries of the pseudo-Finslerian metric.  

\begin{example}By \cite[Lemma 2.7]{alexandrino2019singular},
the isometry group of a Randers metric 
$R=\alpha+\beta$ coincides with the subgroup of $\alpha$-isometries preserving the 
$1$-form 
$\beta$.
For example, when 
$\alpha$ is the canonical Euclidean metric on 
$\mathbb R^3$
 and 
$\beta(x)=cx$, the only linear isometries are the rotations about the 
$x_3$-axis. This demonstrates the way anisotropy constrains the symmetry structure of the system. 
\end{example}

The fundamental tensor is an anisotropic $(0,2)$-tensor and its vertical derivative is the  following anisotropic $(0,3)$-tensor called \textbf{Cartan tensor}:
$$ C_v(u_1,u_2,u_3):=\frac{d}{dt}|_{t=0}g_{v+tu_1}(u_2,u_3)$$
for any $v\in \mathcal  U$ and $u_1,u_2,u_3\in T_{\pi(v)}M$.
For each $v\in \mathcal U$, $C_v$ is a symmetric trilinear tensor and satisfies
$C_v(v,.,.)=0$.

An \textbf{anisotropic vector field} on $\mathcal U$ is a smooth map $X:\mathcal U\to TM$ such that $ X(v)\in T_{v}T_{\pi(v)}M$ for all $v\in \mathcal U$. 
In local holonomic coordinates 
$$X= \sum_iX_i\frac{\partial}{\partial x_i}\circ \pi,$$
where $X_i:\mathcal U\to \mathbb R$ are smooth functions. 

The \textbf{anisotropic L-gradient} of a smooth function $f\in \mathcal F(M)$ is the anisotropic vector field $\nabla f:\mathcal U\to TM$, $v\mapsto \nabla ^vf$ implicitly given by 
$$g_v(\nabla^vf,w)=df_{\pi(v)}(w)$$
 for all $w\in T_{\pi(v)}M$.  In coordinates we have 
 $$\nabla^v f_p=\sum_{ik}g^{ik}(v)\frac{\partial f} {\partial x_i}(p)\frac{\partial } {\partial x_k}(p)$$
for each $p\in M$ and $v\in \mathcal U\cap T_pM$ where $g^{ij}(v)$ is the inverse matrix of $g_{ij}(v).$ 

Let $L$ be a conic pseudo-Finslerian metric and $\rho$ be a positive function on $M$. In particular, $\tilde{L} = (\rho \circ \pi) L$ defines a conic pseudo-Finslerian metric, called the \textbf{conformal} metric. If $\nabla f$ and $\widetilde{\nabla} f$ denote the anisotropic $L$-gradient and anisotropic $\tilde{L}$-gradient of a function $f$, respectively, then
\begin{eqnarray}\label{conformGrad}
    \nabla^v f = (\rho \circ \pi) \widetilde{\nabla}^v f.
\end{eqnarray}
for each $v\in \mathcal U$.
Indeed, it suffices to note that
\begin{eqnarray*}
    df(w) &=& g^{\tilde{L}}_{v}(\widetilde{\nabla}^v f, w) = \frac{1}{2} \frac{\partial^2}{\partial \partial s}\Big|_{t,s=0} \tilde{L}(v+t\widetilde{\nabla}^v f + sw)\\ &=& \frac{\rho \circ \pi}{2} \frac{\partial}{\partial s}\Big|_{s=0} L(v+t\widetilde{\nabla}^v f + sw) = (\rho \circ \pi) g^{L}_{v}(\widetilde{\nabla}^v f, w) = g^{L}_{v}((\rho \circ \pi)\widetilde{\nabla}^v f, w)
\end{eqnarray*}
for any $w\in T_{\pi(v)}M$, which implies equation \eqref{conformGrad}. 
\subsection{Chern Connection and Geodesic}

An \textbf{anisotropic linear connection} is a map 
\begin{eqnarray*}
    \nabla:\mathcal U\times \mathfrak X(M)\times \mathfrak X(M)&\to& TM \\
    (v,X,Y)&\mapsto & \nabla^v_XY
\end{eqnarray*}
such that, for any $X,Y\in \mathfrak X(M)$ the map $v\mapsto \nabla^v _X Y$ is an anisotropic vector field and $\nabla^V$ is a linear connection on $TU$ for each $V\in \mathfrak X(U)$  without singularities defined on an open subset $U \subset M$. We say that $\nabla$ is \textbf{torsion-free} if  
$$ \nabla^v_XY-\nabla^v_YX=[X,Y]$$
for every $X,Y\in \mathfrak X(M)$ and $v\in \mathcal U$. Given a local holonomic frame $\{\frac{\partial}{\partial x_i}\}$  on an open subset $U$, we define the Christoffel symbols of $\nabla$ as the functions $\Gamma_{ij}^k:\mathcal U\to \mathbb R$ satisfying $\nabla^v_{\frac{\partial}{\partial x_i}}\frac{\partial}{\partial x_j}=\sum_ k\Gamma^k_{ij}(v)\frac{\partial}{\partial x_k}$. Furthermore \begin{equation}
    \nabla^v_XY=\sum_k\Big\{X(y_k)_{\pi(v)}+\sum_{ij} \Gamma_{ij}^k(v)x_iy_j\Big\}\frac{\partial}{\partial x_k}(\pi(v))
\end{equation}
if $X=\sum_ix_i\frac{\partial}{\partial x_i}$ and $Y=\sum_iy_i\frac{\partial}{\partial x_i}$. 

The \textbf{Chern connection} on $L$ is the anisotropic linear connection which is torsion-free and \textbf{compatible with the metric} $L$ in the sense that 
$$0=(\nabla_X g)_V(Y,Z):=Xg_V(Y,Z)-g_V(\nabla^V_XY,Z))-g_V(Y,\nabla^V_XZ)-2C_V(Y,Z,\nabla^V_XV).$$

\begin{theorem}\label{ChernConnection}
There exists a unique Chern connection and it is characterized by  \textbf{Koszul-type formula}
 \begin{eqnarray*}		2g_V(\nabla^V_XY,Z)&=&Xg_V(Y,Z)-Zg_V(X,Y)+Yg_V(Z,X)\\
			&+&g_V([X,Y],Z)+ g_V([Z,X],Y)-g_V([Y,Z],X)\\
			&+& 2(-C_V(\nabla^V_X V,Y,Z) -C_V(\nabla^V_YV,Z,X) +C_V(\nabla^V_ZV,X,Y)),
		\end{eqnarray*}    
for each vector field without singularities $V$ and vector fields $X,Y,Z$.\end{theorem}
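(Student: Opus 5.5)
The plan follows the classical Levi-Civita argument: derive the Koszul-type formula from the two defining properties, then use it both to get uniqueness and to construct the connection. Throughout we fix a vector field $V$ without singularities on an open set $U$ with $V(U)\subset\mathcal U$. By definition of anisotropic connection, $\nabla^V$ is then an honest linear connection on $U$, and $g_V$ is an honest (pseudo-)Riemannian tensor field on $U$, nondegenerate by regularity of $\mathcal L$. The only novelty compared with the Levi-Civita case is that differentiating $g_V(Y,Z)$ along $X$ produces an extra Cartan term. This is exactly the term $2C_V(Y,Z,\nabla^V_XV)$ in the compatibility condition.

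\emph{Derivation of the formula, hence uniqueness.} Assume $\nabla$ is torsion-free and compatible with $L$. Write the compatibility identity three times, for the cyclic permutations $(X,Y,Z)$, $(Y,Z,X)$, $(Z,X,Y)$. Add the first and third and subtract the second. Torsion-freeness replaces each difference $\nabla_AB-\nabla_BA$ by $[A,B]$, leaving $2g_V(\nabla^V_XY,Z)$ plus bracket terms. The Cartan terms combine into $-2C_V(\nabla^V_XV,Y,Z)-2C_V(\nabla^V_YV,Z,X)+2C_V(\nabla^V_ZV,X,Y)$, using the symmetry of $C_V$; the signs need to be checked carefully here. This gives the Koszul-type formula. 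The right-hand side still involves $\nabla^V V$, so uniqueness needs one more step. Put $X=V$ in the formula. Since $C_V(V,\cdot,\cdot)=0$, two of the three Cartan terms contain $V$ in a slot and vanish; only $C_V(\nabla^V_YV,Z,V)$, with $Y=V$, is affected, and it is also zero because $V$ occupies a slot. Hence $g_V(\nabla^V_VV,Z)$ is determined by $g_V$, the brackets and derivatives alone, with no Cartan terms. Next set $Y=V$ with $X$ arbitrary. Then $C_V(\nabla^V_XV,V,Z)=0$, and the remaining Cartan terms involve $\nabla^V_VV$, which is already known. By nondegeneracy of $g_V$ this determines $\nabla^V_XV$ for all $X$. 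Substituting back, every term on the right-hand side is known, so nondegeneracy determines $\nabla^V_XY$. This proves uniqueness.

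\emph{Existence.} Run the same steps as definitions. Define $\nabla^V_VV$, then $\nabla^V_XV$, then $\nabla^V_XY$ via the formula, using nondegeneracy of $g_V$ at each stage. Then check four things. First, tensoriality in $X$ and the Leibniz rule in $Y$: this is the same routine computation as for Levi-Civita, since the Cartan terms are $C^\infty$-linear. Second, consistency: the defined $\nabla^V_XY$ with $Y=V$ must agree with the previously defined $\nabla^V_XV$, and this holds by construction. Third, torsion-freeness: swap $X$ and $Y$ and subtract; the Cartan terms are symmetric under this swap by symmetry of $C_V$. Fourth, compatibility: add the formula for $(X,Y,Z)$ and $(X,Z,Y)$.

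It remains to show that $\nabla^v_XY$ depends only on $v=V(p)$ and not on the extension $V$. I would argue this in coordinates. From the formula, the Christoffel symbols come out as explicit functions of $g_{ij}(v)$, their $x$-derivatives, $C_{ijk}(v)$ and the $v$-components alone, with derivatives of $V$ cancelling. Equivalently, one identifies $\Gamma^k_{ij}(v)$ with the classical Chern symbols $\gamma^k_{ij}-\tfrac{1}{2}g^{kl}\ldots$, defined in terms of the geodesic spray coefficients. This independence, together with smoothness in $v\in\mathcal U$, is the main obstacle. I would handle it with the coordinate computation, noting that $\nabla^V_XV$ contains derivative terms of $V$ that match $X(V)$ exactly.
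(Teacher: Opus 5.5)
The paper does not prove this theorem; it is quoted as background from the cited work of Javaloyes. Your outline is the standard argument and is correct: derive the Koszul-type formula from the two axioms, get uniqueness by first determining $\nabla^V_VV$ from $X=Y=V$ (where all Cartan terms drop out), then $\nabla^V_XV$ from $Y=V$, then $\nabla^V_XY$, and get existence by using the same formula as a definition and checking pointwise dependence on $V(p)$ in coordinates.

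The only step with real content is the cancellation of the first derivatives of $V$. You should write it out, together with the normalization it needs. In a chart, the part of $X\,g_V(Y,Z)$ coming from the variation of $V$ is $\frac{d}{dt}\big|_{t=0}g_{V+tX(V)}(Y,Z)$, with $X(V):=X(V^l)\,\partial_l$. This cancels the $X(V)$-part of $2C_V(Y,Z,\nabla^V_XV)$ exactly when $2C_v(u,w_1,w_2)=\frac{d}{dt}\big|_{t=0}g_{v+tu}(w_1,w_2)$, which is the convention of the cited sources. The paper's displayed definition of $C$ has no factor $\tfrac12$. Under that definition, a residual term, $C_V$ evaluated on first derivatives of $V$, survives; it already appears in the formula for $\nabla^V_XV$. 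One checks that no anisotropic connection can then satisfy the axioms unless $C\equiv 0$. So your proof is right for the intended statement, but the normalization should be stated explicitly.

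Two minor bookkeeping slips:
\begin{enumerate}
\item With your cyclic labelling, first $+$ third $-$ second yields the formula for $2g_V(\nabla^V_XZ,Y)$. To obtain $2g_V(\nabla^V_XY,Z)$ directly, add the first two and subtract the third.
\item When only $X=V$ is imposed, the surviving Cartan term is $-2C_V(\nabla^V_VV,Y,Z)$, not the one you name. It is this term that the further substitution $Y=V$ removes.
\end{enumerate}
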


\begin{corollary}
    \label{lema2} Let $L$ be a conic pseudo-Finslerian metric and let $\tilde L = fL$ be a metric conformally related to $L$ where $f:M\to \mathbb R$ is a positive smooth function. Then 
    $$ \widetilde{\nabla}^V_VV= \nabla^V_VV+\frac{V(f)}{f}V-\frac{g_v(V,V)}{2f}\nabla^Vf.$$
    where $\nabla$ and $\widetilde{\nabla}$ are the Chern connections of $L$ and  $\tilde L$ respectively.  
\end{corollary}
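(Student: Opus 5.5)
We will prove the formula by evaluating the Koszul-type formula of Theorem~\ref{ChernConnection} for both metrics with $X=Y=V$, and then comparing the results.

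\textbf{Step 1: the Koszul formula with $X=Y=V$.} Since $[V,V]=0$ and $C_V(V,\cdot,\cdot)=0$ (by symmetry of $C_V$), this gives
\[
2g_V(\nabla^V_VV,Z)=2Vg_V(V,Z)-Zg_V(V,V)-2g_V([V,Z],V)+2C_V(\nabla^V_ZV,V,V).
\]
The last term again vanishes by $C_V(V,\cdot,\cdot)=0$.

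\textbf{Step 2: the conformal metric's tensors.} Since $f$ depends only on the base point, its fiber derivatives vanish. Differentiating twice along the fiber therefore gives
\[
\tilde g_v=(f\circ\pi)\,g_v,\qquad \tilde C_v=(f\circ\pi)\,C_v .
\]

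\textbf{Step 3: expand the $\tilde L$ identity.} Write the Step~1 identity for $\tilde L$ and substitute $\tilde g_V=fg_V$:
\[
2f\,g_V(\widetilde\nabla^V_VV,Z)=2V\big(fg_V(V,Z)\big)-Z\big(fg_V(V,V)\big)-2fg_V([V,Z],V).
\]
Expanding with the Leibniz rule, the terms without derivatives of $f$ reproduce $f\cdot 2g_V(\nabla^V_VV,Z)$. The two remaining terms are
\[
2V(f)\,g_V(V,Z)-Z(f)\,g_V(V,V).
\]

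\textbf{Step 4: identify the extra terms.} By the definition of the anisotropic gradient, $Z(f)=df(Z)=g_V(\nabla^Vf,Z)$. Dividing by $2f$ gives
\[
g_V(\widetilde\nabla^V_VV,Z)=g_V\Big(\nabla^V_VV+\frac{V(f)}{f}V-\frac{g_V(V,V)}{2f}\nabla^Vf,\;Z\Big)
\]
for all $Z$. Nondegeneracy of $g_V$ then yields the claimed formula.

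\textbf{Main obstacle.} The delicate point is justifying that the Cartan tensor terms really drop out in Steps~1 and~3. This needs the symmetry of $C_V$ together with $C_V(V,\cdot,\cdot)=0$. It also needs that the Koszul formula is applied with the same $V$ on both sides, so that $\tilde C=fC$ introduces no extra contributions.
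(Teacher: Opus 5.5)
Your proposal is correct and follows the same route as the paper: you specialize the Koszul-type formula to $X=Y=V$, use $\tilde g_V=(f\circ\pi)g_V$ together with $C_V(V,\cdot,\cdot)=0$ to remove all Cartan terms, obtain the extra terms $2V(f)g_V(V,Z)-Z(f)g_V(V,V)$ from the Leibniz rule, rewrite $Z(f)=g_V(\nabla^Vf,Z)$, and conclude by nondegeneracy of $g_V$. Your concern about $\tilde C=fC$ is in fact moot, since the Cartan terms for $\tilde L$ vanish by the same identity $\tilde C_V(V,\cdot,\cdot)=0$, independently of how $\tilde C$ relates to $C$.
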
\begin{proof} The fundamental tensor of $\tilde L$ coincides with
$\tilde g_{v_p}=f(p)g_v,$
for all $v\in \mathcal U.$ Then by  Koszul-type formula and the properties of the Cartan tensor
    \begin{eqnarray*}			2fg_V(\widetilde{\nabla}^V_VV,Z)&=&fXg_V(V,V)-fZg_V(V,V)+fYg_V(Z,V)\\
		&+&fg_V([V,V],Z)+ fg_V([Z,V],V)-fg_V([V,Z],V)\\
			&+&V(f)g_V(V,Z)-Z(f)g_V(V,V)+V(f)g_V(Z,V)\\
            &=& 2f g_V(\nabla^V_VV,Z)
            +V(f)g_V(V,Z)-Z(f)g_V(V,V)+V(f)g_V(Z,V)\\
            &=& 
            2fg_V( \nabla^V_VV+\frac{V(f)}{f}V-\frac{g_v(V,V)}{2f}\nabla^Vf,Z).
		\end{eqnarray*}
This completes the proof.\end{proof}

A smooth curve $\gamma$ is  \textbf{$\mathcal U$-admissible} if $\gamma'\in \mathcal U$. 
We will denote by $\mathfrak X(\gamma)$  the space of smooth vector fields along $\gamma$.  An \textbf{anisotropic covariant derivative} in $\mathcal U$ along $\gamma$ is the map 
\begin{eqnarray*}
D_{\gamma}:\mathcal U\cap T_{\gamma}M\times \mathfrak X(\gamma)\to TM  
\end{eqnarray*}
such that $D_{\gamma}^{V}$ is covariant derivative along $\gamma$ for each $V\in \mathfrak X(\gamma)$ with $V(t)\in \mathcal{U}$ for all $t\in [a,b].$

\begin{lemma}Let $\nabla$ be the Chern anisotropic connection of a conic pseudo-Finslerian metric and $\gamma$ be an admissible smooth curve. 
     Then there is a unique anisotropic covariant derivative in $\mathcal U$ along a smooth curve $\gamma:[a,b]\to M$ with the
following property: if $X \in  \mathfrak X(M)$, then $D^v_{\gamma}(X\circ \gamma)=\nabla^v_{\gamma'(t)}X$ where $\pi(v)=\gamma(t)$.
\end{lemma}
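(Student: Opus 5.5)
The plan follows the classical construction of the induced covariant derivative, done in local coordinates and then globalized by uniqueness. We first prove uniqueness. Let $D_\gamma$ be any anisotropic covariant derivative along $\gamma$ with the stated property. Fix $t_0$ and a chart $(x_1,\dots,x_n)$ around $\gamma(t_0)$, and write $W\in\mathfrak X(\gamma)$ as $W(t)=\sum_j w_j(t)\frac{\partial}{\partial x_j}(\gamma(t))$. Fix also an admissible $V\in\mathfrak X(\gamma)$, i.e.\ $V(t)\in\mathcal U$ with $\pi(V(t))=\gamma(t)$.

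Since $D^V_\gamma$ is, for this fixed $V$, an ordinary covariant derivative along $\gamma$, it is $\mathbb R$-linear and satisfies the Leibniz rule. It is therefore local, by the standard bump-function argument. Applying these rules together with the hypothesis $D^v_\gamma(\frac{\partial}{\partial x_j}\circ\gamma)=\nabla^v_{\gamma'(t)}\frac{\partial}{\partial x_j}$, and using the coordinate expression of $\nabla$ given above with $\gamma'=\sum_i \dot x_i\frac{\partial}{\partial x_i}$, we obtain
\[
D^V_\gamma W(t)=\sum_k\Big\{\dot w_k(t)+\sum_{ij}\Gamma^k_{ij}(V(t))\,\dot x_i(t)\,w_j(t)\Big\}\frac{\partial}{\partial x_k}(\gamma(t)).
\]
This formula depends only on $\nabla$, so any two such operators agree on each chart domain. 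By locality they agree everywhere.

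For existence, we use the same displayed formula as a definition on each coordinate patch met by $\gamma$. Three things then need checking. First, for fixed admissible $V$ the formula is $\mathbb R$-linear in $W$ and satisfies Leibniz, so $D^V_\gamma$ is a covariant derivative along $\gamma$. Second, smoothness in $t$ holds because the $\Gamma^k_{ij}$ are smooth functions on $\mathcal U$ and $V(t)\in\mathcal U$. Third, when $W=X\circ\gamma$ for $X\in\mathfrak X(M)$ we have $\dot w_k=\sum_i \dot x_i\,\partial_i X_k$, so the formula reproduces $\nabla^v_{\gamma'}X$, again by the coordinate expression of $\nabla$; here $v=V(t)$. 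On overlaps of charts, both local definitions satisfy the defining property, so the uniqueness argument applied on the intersection shows they coincide, and they glue to a global operator. Note that the $V$-dependence enters only through the coefficients $\Gamma^k_{ij}(V(t))$, which is what makes $D_\gamma$ an anisotropic covariant derivative.

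The only delicate point is the compatibility on overlaps. One could instead verify it directly from the transformation law of the anisotropic Christoffel symbols, but the uniqueness argument handles it more cleanly. Uniqueness used only linearity, Leibniz, locality and the defining property, and all of these are satisfied by each local definition restricted to the intersection. I would take this route rather than computing with the transformation law.
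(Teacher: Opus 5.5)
Your proposal is correct. The paper gives no proof of this lemma; it is taken over from the anisotropic calculus in the works of Javaloyes cited at the start of that section. The local expression you derive,
$D^V_\gamma W=\sum_k\{\dot w_k+\sum_{ij}\Gamma^k_{ij}(V)\dot x_i w_j\}\frac{\partial}{\partial x_k}$,
is exactly the coordinate formula the paper records right after the statement. So your argument (uniqueness, then local existence, then gluing) is the standard one the paper implicitly relies on. Using uniqueness to settle compatibility on chart overlaps is legitimate, because each local operator is linear, Leibniz, local, and satisfies the defining property.

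One step should be made explicit. The hypothesis is only stated for $X\in\mathfrak X(M)$, whereas $\frac{\partial}{\partial x_j}$ is defined only on the chart. You should apply the hypothesis to a global field $X_j$ agreeing with $\frac{\partial}{\partial x_j}$ near $\gamma(t_0)$. Then use locality of $D^V_\gamma$ along $\gamma$, and locality of $\nabla^v$ (visible from its coordinate expression), to replace $X_j$ by $\frac{\partial}{\partial x_j}$.
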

   
We have that $D^{\lambda v}=D^{ v}$
for each $v\in \mathcal U$ and $\lambda>0$ and 
\begin{equation*}
\frac{d}{dt}g_{\gamma'}(X,Y)=g_{\gamma'}(D^{\gamma'}_{\gamma}X,Y)+g_{\gamma'}(X,D^{\gamma'}_{\gamma}Y)+2C_{\gamma'}(X,Y,D^{\gamma'}_{\gamma}\gamma'),
\end{equation*}
for each smooth curve $\gamma$ and vector fields $X,Y$ along $\gamma$.
In local holonomic coordinates 
$$D^v_{\gamma}X=\sum_k\Big\{x_k'(t)+\sum_{ij}\Gamma^k_{ij}(v)\gamma_i'(t)x_j(t)\Big\}\frac{\partial}{\partial x_k}(\gamma(t)),$$
if $X=\sum_kx_k\frac{\partial}{\partial x_k}\circ \gamma$ and $\pi(v)=\gamma(t)$.

The \textbf{geodesics of L} are the $\mathcal L$-geodesics, where $\mathcal L=\frac{1}{2}L$.  We can see that $\gamma$ is a geodesic of $L$ if and only if it satisfies the \textbf{geodesic equation}  $D^{\gamma'}_{\gamma}\gamma'=0$, or in coordinates  
\begin{equation}\label{geodesicequation}
\gamma_k''+\sum_{ij} \Gamma_{ij}^k(\gamma')\gamma_i'\gamma_j'=0 
\end{equation}
for all $k$.

\begin{lemma}\label{Lema1}
     An admissible curve $\gamma : I \subset \mathbb R \to M$ is a positive reparameterized geodesic of a conic pseudo-Finslerian metric $L$ if and only if it satisfies 
     \begin{equation*}
D^{\gamma'}_{\gamma}\gamma'=h(t)\gamma'(t),
     \end{equation*}
for some differentiable function $h:I\to \mathbb R$.\end{lemma}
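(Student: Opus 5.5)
The plan is to prove both implications directly from the chain rule and the $0$-homogeneity of the covariant derivative in its reference vector, $D^{\lambda v}=D^v$ for $\lambda>0$.

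\emph{Geodesic $\Rightarrow$ equation.} Suppose $\gamma=\sigma\circ\varphi$, where $\sigma$ is a geodesic of $L$ and $\varphi$ is a smooth function with $\varphi'>0$. Then $\gamma'(t)=\varphi'(t)\,\sigma'(\varphi(t))$. Since $\varphi'>0$ and $\mathcal U$ is a cone, $\gamma$ is admissible, and $D^{\gamma'}=D^{\sigma'\circ\varphi}$. In local holonomic coordinates the reparametrization rule for the covariant derivative along a curve reads
\[
D^{\gamma'}_{\gamma}\gamma'(t)=\varphi''(t)\,\sigma'(\varphi(t))+\varphi'(t)^2\,\bigl(D^{\sigma'}_{\sigma}\sigma'\bigr)(\varphi(t)).
\]
To check this, differentiate $\gamma_k'=\varphi'\sigma_k'\circ\varphi$. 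This gives $\gamma_k''=\varphi''\sigma_k'+\varphi'^2\sigma_k''$. The Christoffel term is $\Gamma^k_{ij}(\gamma')\gamma_i'\gamma_j'=\varphi'^2\,\Gamma^k_{ij}(\sigma')\sigma_i'\sigma_j'$, using $\Gamma^k_{ij}(\lambda v)=\Gamma^k_{ij}(v)$. Applying the geodesic equation \eqref{geodesicequation} for $\sigma$, the second term vanishes. We are left with $D^{\gamma'}_\gamma\gamma'=h\gamma'$ where $h=\varphi''/\varphi'$, which is differentiable.

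\emph{Equation $\Rightarrow$ geodesic.} Suppose $D^{\gamma'}_\gamma\gamma'=h\gamma'$. Fix $t_0\in I$ and define
\[
\psi(t)=\int_{t_0}^t \exp\Bigl(-\int_{t_0}^{u}h(r)\,dr\Bigr)\,du .
\]
Then $\psi'>0$ and $\psi''=-h\psi'$. So $\psi$ is a diffeomorphism of $I$ onto an interval $J$; let $\varphi=\psi^{-1}$ and set $\sigma=\gamma\circ\varphi$. Then $\sigma'(s)=\varphi'(s)\gamma'(\varphi(s))$ with $\varphi'>0$, so $\sigma$ is admissible. Applying the same chain-rule identity with the roles swapped gives
\[
D^{\sigma'}_\sigma\sigma'=\bigl(\varphi''+\varphi'^2\,h\circ\varphi\bigr)\gamma'\circ\varphi .
\]
From $\varphi'=1/(\psi'\circ\varphi)$ we get $\varphi''=-\psi''\circ\varphi\cdot\varphi'^3=h\circ\varphi\cdot\varphi'^2\cdot(-1)$, so the bracket vanishes. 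Hence $\sigma$ is a geodesic of $L$, and $\gamma=\sigma\circ\psi$ is a positive reparametrization of it.

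The only delicate point is the first step: the anisotropic Christoffel symbols must be evaluated at $\lambda v$ rather than $v$, so their $0$-homogeneity is essential. This follows from $D^{\lambda v}=D^v$, stated above. We also need $h$ smooth enough for $\psi$ to be $C^2$, which holds since $h$ is differentiable. Everything else is routine ODE manipulation.
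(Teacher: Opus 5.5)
Your forward direction is correct and is the same chain-rule computation the paper uses. The converse, however, fails as written because of a sign error in your choice of $\psi$. With $\psi''=-h\psi'$ and $\varphi=\psi^{-1}$ you get
\[
\varphi''=-(\psi''\circ\varphi)\,\varphi'^3=(h\circ\varphi)(\psi'\circ\varphi)\,\varphi'^3=(h\circ\varphi)\,\varphi'^2,
\]
not $-(h\circ\varphi)\varphi'^2$. So the bracket $\varphi''+\varphi'^2\,h\circ\varphi$ equals $2(h\circ\varphi)\varphi'^2$, which vanishes only where $h$ does. A concrete failure: take $M=\mathbb R$ with the Euclidean metric and $\gamma(t)=e^t$. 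Then $D^{\gamma'}_\gamma\gamma'=\gamma''=\gamma'$, so $h\equiv 1$. Your recipe with $t_0=0$ gives $\psi(t)=1-e^{-t}$ and $\sigma(s)=\gamma(\varphi(s))=1/(1-s)$, which is not a geodesic.

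Your own forward direction shows what the sign must be. If $\gamma=\sigma\circ\psi$ with $\sigma$ a geodesic, then $h=\psi''/\psi'$, so the reparametrization must satisfy $\psi''=h\psi'$. That is, take $\psi(t)=\int_{t_0}^t\exp\bigl(\int_{t_0}^u h(r)\,dr\bigr)\,du$, which is exactly the paper's $s$ with $s'=\exp(\int h)$. With this sign, $\varphi''=-(h\circ\varphi)\varphi'^2$, the bracket vanishes, and the rest of your argument goes through unchanged. The minus sign belongs instead in the derivative of the inverse map, $\varphi'(s)=\exp\bigl(-\int_{t_0}^{\varphi(s)}h\bigr)$, which is likely where the mix-up arose.

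You can also avoid inverting altogether, as the paper does. Substitute $\gamma=\sigma\circ\psi$ into the forward identity to get $h\gamma'=\psi'^2\,(D^{\sigma'}_\sigma\sigma')\circ\psi+(\psi''/\psi')\gamma'$. Then $\psi''/\psi'=h$ together with $\psi'>0$ forces $D^{\sigma'}_\sigma\sigma'=0$.
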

\begin{proof}
Suppose that $\gamma(t)=\tilde\gamma\circ s(t)$. Then $\gamma'(t)=s'(t)\tilde \gamma'(s(t))$ and 
\begin{eqnarray}\label{Eqgeodcomp}
    D ^{ \gamma'}_{ \gamma} \gamma'&=&s'D_{\gamma}^{\gamma'}(\tilde \gamma'\circ s) +s''\tilde \gamma'\circ s =
    (s'(t))^2D^{s'\tilde \gamma'\circ s}_{ \tilde \gamma}(\tilde \gamma \circ s)+\frac{s''}{s'} \gamma'.
\end{eqnarray}
Thus, if $s'>0$ and $\tilde \gamma$ is a geodesic, then  
\begin{eqnarray*}
    D^{ \gamma'}_{ \gamma} \gamma'&=&(s')^2D^{\tilde \gamma'}_{ \tilde \gamma}\tilde \gamma' + \frac{s''}{s'} \gamma'
=\frac{s''}{s'} \gamma'.
\end{eqnarray*}
Conversely suppose that 
\begin{equation}\label{eqsupo} D_{\gamma}^{\gamma'}\gamma'=h\gamma' .
\end{equation} Consider the following function 
$$s(t):=\int \exp(\int h(t)dt)dt.$$
In particular, $s'=exp(\int h(t)dt)>0$. Let $\tilde \gamma$ be a smooth curve given by $\gamma=\tilde \gamma \circ s$. By construction 
$h=\frac{s''}{s'}$
and then by  Equations~\ref{Eqgeodcomp} and \ref{eqsupo} we can conclude that $\tilde \gamma$ is a geodesic. 
\end{proof}

\subsection{Anisotropic Second Fundamental Form}

A (regular) \textbf{distribution} of dimension $k$ on a manifold $M$ is a map 
$\mathcal D$ assigning to each point $p\in M$ a $k$-dimensional subspace 
$\mathcal D_p\subset T_pM$ such that for every 
$v\in \mathcal D$ there exist an open neighborhood $U$ of $\pi(v)$ and a vector field 
$V\in \mathfrak X(U)$ such that $V(\pi(v))=v$ and $V(p)\in \mathcal D_p$ for all 
$p\in U$. 
We denote by $\mathfrak X(\mathcal D)$ the set of smooth sections of $\mathcal D$, that is,
$V\in \mathfrak X(\mathcal D)$ if and only if $V\in \mathfrak X(M)$ and 
$V(p)\in \mathcal D_p$ for every $p\in M$. 

Let $\mathcal D$ be a distribution on a submanifold $N$ of $M$. An  \textbf{$\mathcal U$-anisotropic distribution} along $\mathcal D$ is a map $\mathcal S$ assigning to each $v\in \mathcal U\cap \mathcal D$ a subspace $\mathcal S_v\subset T_{\pi(v)}M$ such that for any anisotropic vector field $V\in \mathfrak X(\mathcal U)$, the corresponding family $\mathcal S_V$ defines a distribution.

If $L:\mathcal U\to\mathbb R$ is a conic pseudo-Finslerian metric and 
$\mathcal D$ is a smooth regular distribution on a submanifold $N$, 
we can define the \textbf{anisotropic orthogonal distribution} 
by
\[
\mathcal D^{\perp}_v := 
\{ w\in T_{\pi(v)}M : g_v(w,u)=0 \text{ for all } u\in \mathcal D_{\pi(v)}\}
\]
for each $v\in \mathcal U\cap \mathcal D$. If $\mathcal D$ is \textbf{nondegenerate}, for each $v\in \mathcal U\cap \mathcal D$ we have a direct sum decomposition $T_pM = \mathcal D_p \oplus \mathcal D^{\perp}_v,$
whose canonical projections will be denoted by 
$P_v:T_pM\to \mathcal D_p$ and 
$P_v^{\perp}:T_pM\to \mathcal D^{\perp}_v$. We then define the map $B:\mathcal U\cap \mathcal D\to  \mathcal D^{\perp}$ by
\begin{equation}\label{fakesegundaforma}
B(v)= P_{v}^{\perp}\big(\nabla _V^V V\big),
\end{equation}
where $V\in \mathfrak X(\mathcal U\cap \mathcal D)$ satisfies 
$V(\pi(v))=v$.

\begin{lemma}
The map $B$ is well defined.
\end{lemma}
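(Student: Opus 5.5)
We need to show that $B(v)=P_v^{\perp}(\nabla^V_VV)$ does not depend on the extension $V\in\mathfrak X(\mathcal U\cap\mathcal D)$ with $V(\pi(v))=v$. Let $V,W$ be two such extensions on a neighborhood of $p=\pi(v)$ in $N$, extended arbitrarily to $M$ where the Chern connection requires vector fields on $M$. The claim reduces to showing that $\nabla^V_VV-\nabla^W_WW$ evaluated at $p$ lies in $\mathcal D_p$. Then its $P_v^{\perp}$-projection vanishes, because $P_v^{\perp}$ depends only on $v$ and on the splitting $T_pM=\mathcal D_p\oplus\mathcal D^{\perp}_v$, and not on the extension.

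First I will dispose of the dependence on the anisotropic index. The local formula for $\nabla^v_XY$ shows that $(\nabla^V_XY)(p)$ depends only on $V(p)=v$, on $X(p)$, and on the first jet of $Y$ along $X(p)$. Indeed, the Christoffel symbols $\Gamma^k_{ij}$ are evaluated at $v$ only. Since $V(p)=W(p)=v$, the anisotropic directions agree, so
$$\nabla^V_VV-\nabla^W_WW=\nabla^v_vV-\nabla^v_vW=\nabla^v_v(V-W)\quad\text{at }p,$$
where I use $V(p)=W(p)=v$ in the $X$-slot as well. Set $Z=V-W$. Then $Z$ is a section of $\mathcal D$ near $p$ with $Z(p)=0$.

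Next I will show that $\nabla^v_vZ\in\mathcal D_p$. Choose a local frame $E_1,\dots,E_k$ of $\mathcal D$ near $p$, which exists by the regularity assumption in the definition of distribution. Write $Z=\sum_a z_aE_a$ with $z_a(p)=0$. Applying the Leibniz rule of the linear connection $\nabla^{V}$ and evaluating at $p$ gives
$$\nabla^v_vZ=\sum_a v(z_a)E_a(p)+\sum_a z_a(p)\nabla^v_vE_a=\sum_a v(z_a)E_a(p)\in\mathcal D_p.$$
Here the derivative $v(z_a)$ makes sense because $v\in T_pN$: the functions $z_a$ are defined on $N$ and $v\in\mathcal D_p\subset T_pN$. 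The same argument also shows that the choice of extension of $V,W$ off $N$ is irrelevant, since only derivatives in the direction $v\in T_pN$ enter.

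The only delicate point, which I expect to be the main obstacle, is justifying that the term $\sum_a z_a(p)\nabla^v_vE_a$ is meaningful and vanishes. For this I will use the coordinate expression $\nabla^v_XY=\sum_k\{X(y_k)+\sum\Gamma^k_{ij}(v)x_iy_j\}\partial_k$. It makes the tensoriality in $Y$ up to first derivatives explicit and confirms that the Christoffel term is killed by $Z(p)=0$. Applying $P_v^{\perp}$ to $\nabla^v_vZ\in\mathcal D_p$ then gives $0$, so $B$ is well defined.
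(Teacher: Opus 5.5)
Your proof is correct, but it takes a different route from the paper's. Writing $\tilde V$ for your second extension $W$, both arguments start from the same reduction, $\nabla^V_VV-\nabla^{\tilde V}_{\tilde V}\tilde V=\nabla^v_v(V-\tilde V)$ at $p$. They then diverge.

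The paper pairs this vector with an arbitrary section $W$ of the anisotropic orthogonal distribution $\mathcal D^{\perp}_V$ and uses metric compatibility of the Chern connection. Since $g_V(V-\tilde V,W)\equiv 0$ and $V-\tilde V$ vanishes at $p$, the term $g_v(V-\tilde V,\nabla^v_vW)$ and the Cartan term drop out. So $\nabla^v_v(V-\tilde V)$ is $g_v$-orthogonal to $\mathcal D^{\perp}_v$. Nondegeneracy of $g_v$ on $\mathcal D^{\perp}_v$ then kills its $\mathcal D^{\perp}_v$-component.

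You instead show directly that $\nabla^v_v(V-\tilde V)\in\mathcal D_p$, by expanding in a local frame of $\mathcal D$ and using only the Leibniz rule. What your route buys:
\begin{enumerate}
\item[(i)] It is more elementary and metric-free; the fact you prove holds for any anisotropic linear connection, and the metric enters only through the splitting that defines $P^{\perp}_v$.
\item[(ii)] It needs neither the Cartan tensor nor smooth sections of $\mathcal D^{\perp}_V$ through every $w\in\mathcal D^{\perp}_v$, which the paper takes for granted.
\item[(iii)] It explicitly handles extending $V$ off $N$, which the paper passes over.
\end{enumerate}
The paper's route mirrors the classical Weingarten-type proof that the second fundamental form is tensorial, and it stays within the metric-compatibility formalism used elsewhere in the paper.

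The step you flag as the main obstacle is not one. $\nabla^v_vE_a$ is a well-defined vector at $p$, and its coefficient $z_a(p)$ is zero, so that term simply vanishes.
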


\begin{proof}
Let $V,\tilde V\in \mathfrak X(\mathcal U\cap \mathcal D)$ with 
$V(\pi(v))=\tilde V(\pi(v))=v$, and let 
$W\in \mathfrak X(\mathcal D^{\perp}_V)$. 
Then, using the properties of the Chern connection, we obtain
\begin{eqnarray*}
g_v(\nabla_v^v(V-\tilde V),W)
&=& vg_V(V-\tilde V,W)
   - g_v(V(\pi(v))-\tilde V(\pi(v)),\nabla _v^vW)\\
& & +\,2C_v(V(\pi(v))-\tilde V(\pi(v)),W(\pi(v)),\nabla_{v}^vV) = 0.
\end{eqnarray*}
Since $g_v$ is nondegenerate on $\mathcal D^{\perp}_v$, it follows that 
$P_{v}^{\perp}(\nabla_V^VV)
= P_{v}^{\perp}(\nabla_{\tilde V}^{\tilde V}\tilde V)$, 
and hence $B$ is well defined.
\end{proof}

\begin{remark}
If $\mathcal D=TN$ and $L$ is a Riemannian metric, then the map $B$ coincides with the classical second fundamental form of the immersion $N\hookrightarrow M$.
\end{remark}

\subsection{Example: Zermelo Navigation}
The \textbf{Zermelo navigation problem} is the problem of finding the time-minimizing trajectory on a Riemannian manifold $(M,h)$ under the influence of a flow represented by a vector field $W$. These curves are the geodesics of the Zermelo Lagrangian $\mathcal{Z}=\tfrac{1}{2}Z$, where $Z$ is determined implicitly by 
\begin{eqnarray}\label{eqZermelo}
    \alpha_h\Big(\frac{v}{\sqrt{Z(v)}}-W_{\pi(p)}\Big)=1 ,
\qquad \forall,v\in TM,\end{eqnarray} 
where $\alpha^2_h(v)=h(v,v)$.
Define $\Lambda(p)=1-\alpha^2_h(W(p))$. 
If $\Lambda(p)>0$, then for every $v\in T_pM$ there exists a unique number $Z(v)>0$ such that 
$\frac{v}{Z(v)}\in \Sigma_p:=Z^{-1}(1)$, or equivalently, $Z(v)$ satisfies Equation~\ref{eqZermelo}.
On the open set $M_+ = \{\,p\in M \; ;\; \Lambda(p)>0\,\},$
Equation~\eqref{eqZermelo} defines a Finslerian metric $Z$, known as a \textbf{Randers metric with navigation data $(h,W)$}. 
Explicitly,
\begin{equation}\label{EqZermelo1}
    Z(v_p) = \frac{1}{\Lambda^2(p)} 
    \Big(
       \sqrt{\,\Lambda(p)\,h(v,v) + h(v,W_p)^2\,} 
       - h(v,W_p)
    \Big)^2.
\end{equation}

Now consider 
$M_- = \{\,p\in M \; ;\; \Lambda(p)<0\,\},$
and define the conic open subset
\[
\mathcal U := 
\{\, v\in TM_- \; ;\; 
      \Lambda(p)\,h(v,v) + h(v,W)^2 > 0 
      \text{ and } 
      h(v,W) > 0 
\}.
\]
For each $v\in \mathcal U$, there exist exactly two positive numbers $Z(v)$ and $Z_l(v)$ satisfying Equation~\eqref{eqZermelo}. 
The first one is again given by~\eqref{EqZermelo1}, while the second is
\[
Z_l(v_p) = 
\frac{1}{\Lambda^2(p)} 
\Big(
   -\sqrt{\,\Lambda(p)\,h(v,v) + h(v,W_p)^2\,} 
   - h(v,W_p)
\Big)^2.
\]
The metric $Z$ is a \textbf{conic Finslerian metric}, while $Z_l$ is a \textbf{conic pseudo-Finslerian metric}. 

If $\Lambda(p)=0$, then the cone 
$\mathcal U_p = \mathcal U\cap T_pM$ 
coincides with the half-plane containing $W(p)$, and for each $v\in \mathcal U_p$ there exists a unique $Z(v)>0$ satisfying~\eqref{eqZermelo}, namely,
\[
Z(v) = \frac{h(v,v)}{4\,h^2(v,W)}.
\]
This defines a \textbf{conic Finslerian metric}, which is a \textbf{Kropina metric}.

The conic Finslerian metric $Z$ is often referred to as the \textbf{Zermelo metric with navigation data} $(h,W)$, since its geodesics solve Zermelo’s navigation problem.

\medskip

More generally, an \textbf{$(\alpha,\beta)$-metric} is a Finslerian metric of the form
\[
Z = \alpha^2\,\phi^2\!\left(\frac{\beta}{\alpha}\right),
\]
where $\phi$ is a smooth positive function on an interval containing the range of $\beta/\alpha$, 
under suitable technical conditions ensuring the smoothness and strong convexity of $Z$.
This family includes Randers, Kropina, Matsumoto, and quadratic metrics. For more details,  see \cite{javaloyes2020definition}.

Let   $L:\mathcal U\to \mathbb R_+$ be a conic Finslerian metric and $W\in \mathfrak X(M)$. Consider the conic open set 
$$\mathcal V= \{ \lambda(v+W); v\in \mathcal U, L(v)=1\ \mbox{and} \ \lambda>0 \}.$$
A \textbf{Zermelo metric} with \textbf{navigation data} $(L,W)$ is a positive smooth function $ Z:\mathcal V\to \mathbb R$ given implicitly by the equation 
$$  L\Big(\frac{v}{\sqrt{Z(v)}}-W_{\pi(v)}\Big)=1$$
for each $v\in \mathcal V$.
This metric solves the Zermelo navigation problem with navigation data $(L,W)$.

\begin{lemma}\label{ZemeloConforme}
    Let $Z:\mathcal U\to \mathbb R$ be a conic Zermelo metric with navigation data $( L,W)$ and let $\Phi$ be
 a positive smooth function. Then $(\Phi\circ \pi )Z$ is the conic Zermelo metric with navigation data $((\Phi\circ \pi) L,\; \tfrac{1}{\sqrt{\Phi}}\,W).$
\end{lemma}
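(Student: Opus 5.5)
The proof is a direct rescaling argument built on the defining implicit equation of a Zermelo metric. Write $\tilde L=(\Phi\circ\pi)L$, $\tilde W=\Phi^{-1/2}W$, and $\tilde Z=(\Phi\circ\pi)Z$.

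\textbf{Matching the conic domains.} Let $\tilde{\mathcal V}$ be the conic set attached to the data $(\tilde L,\tilde W)$. We show it coincides with the domain $\mathcal V$ of $Z$.

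A vector $u\in T_pM$ satisfies $\tilde L(u)=1$ iff $u=\Phi(p)^{-1/2}v$ with $L(v)=1$. Hence
\[
u+\tilde W_p=\Phi(p)^{-1/2}\bigl(v+W_p\bigr).
\]
Positive multiples of $u+\tilde W_p$ are therefore exactly the positive multiples of $v+W_p$, so $\tilde{\mathcal V}=\mathcal V$. The base set of $\tilde L$ is unchanged, since multiplying by the positive factor $\Phi\circ\pi$ does not alter its domain $\mathcal U$.

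\textbf{Verifying the implicit equation.} Fix $v\in\mathcal V$ with $\pi(v)=p$ and compute, using the 2-homogeneity of $L$:
\[
\tilde L\Bigl(\frac{v}{\sqrt{\tilde Z(v)}}-\tilde W_p\Bigr)
=\Phi(p)\,L\Bigl(\Phi(p)^{-1/2}\Bigl(\frac{v}{\sqrt{Z(v)}}-W_p\Bigr)\Bigr)
=L\Bigl(\frac{v}{\sqrt{Z(v)}}-W_p\Bigr)=1.
\]
This is the only computation, and it is routine once the scalar $\Phi(p)^{-1/2}$ is factored out.

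\textbf{Uniqueness, the main obstacle.} It remains to conclude that $\tilde Z$ is \emph{the} Zermelo metric of $(\tilde L,\tilde W)$. This requires that the implicit equation determines a unique positive value for each $v$.

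First, the inner vector lies in the right set: $\frac{v}{\sqrt{Z(v)}}-W_p$ is in $\mathcal U$, because $\frac{v}{\sqrt{Z(v)}}=u+W_p$ for some $u\in\mathcal U$ with $L(u)=1$. So the equation is posed on the correct domain.

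Second, uniqueness of the positive solution is built into the definition, as for the Randers case around \eqref{eqZermelo}. Even without assuming this, the map $\lambda\mapsto\lambda$, i.e.\ the substitution $\lambda\leftrightarrow\Phi(p)\lambda$, gives a bijection between solutions $\lambda>0$ of
\[
L\Bigl(\frac{v}{\sqrt{\lambda}}-W_p\Bigr)=1
\quad\text{and}\quad
\tilde L\Bigl(\frac{v}{\sqrt{\lambda}}-\tilde W_p\Bigr)=1 .
\]
Thus whatever branch singles out $Z$ singles out $\tilde Z=(\Phi\circ\pi)Z$, and the identification follows. Smoothness and positivity of $\tilde Z$ are immediate from those of $Z$ and $\Phi$.
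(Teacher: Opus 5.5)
Your proof is correct and follows the paper's argument: the paper's entire proof is your one-line computation that pulls the factor $\Phi(p)^{-1/2}$ out of $L$ by $2$-homogeneity, and your checks that the conic domains agree and that positive solutions correspond are careful additions the paper leaves implicit. One small slip to fix: the correspondence of solutions in your last paragraph is $\lambda\mapsto\Phi(p)\lambda$, not the identity, so the second displayed equation should have $\Phi(p)\lambda$ in place of $\lambda$.
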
 
\begin{proof}
 By definition
\[
1 = L\!\left(\frac{v}{\sqrt{Z(v)}} - W_p\right)
  = \Phi(p)\,  L\!\left(\frac{v}{\sqrt{\Phi(p)\,Z(v)}} - \frac{1}{\sqrt{\Phi(p)}}\,W_p\right)
\]    
for each $p\in M$ and  $v\in \mathcal U\cap T_pM$.
\end{proof}

  \begin{lemma}[\cite{alves2024isoparametric}]
\label{Randersimmersion}
Let $R$ be a Randers-Minkowski metric on $\mathbb R^n$ with navigation data $(h, W)$. If $V$ is a subspace, then the induced metric on $V$ is the Randers-Minkowski metric with navigation data $(\tilde h, W^{\top})$ given by
\begin{subequations}
\begin{equation*}
\tilde h := \lambda \, h\vert_{V \times V} \, ,
\end{equation*}
\begin{equation*}
W^{\top} := W - W^{\perp} \, ,
\end{equation*}
\end{subequations}
where $\lambda := \frac{1}{1 - h(W^{\perp}, W^{\perp})}$ and $W^{\perp}$ is the component of $W$ $h$-orthogonal to $V$.
\end{lemma}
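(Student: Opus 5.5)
The statement is Lemma~\ref{Randersimmersion}, so I will argue directly from the implicit navigation definition \eqref{eqZermelo}. No explicit formula manipulation is needed. Recall that a vector $v\in V$ has $R(v)=1$ exactly when $v=u+W$ for some $u\in\mathbb R^n$ with $h(u,u)=1$. So the indicatrix of the induced metric $R|_V$ is
\[
\Sigma\cap V=\{u+W : h(u,u)=1,\ u+W\in V\}.
\]
I will show that this set equals $\{\tilde u+W^{\top} : \tilde u\in V,\ \tilde h(\tilde u,\tilde u)=1\}$. That set is the indicatrix of the Randers--Minkowski metric with data $(\tilde h,W^{\top})$, and a positively $2$-homogeneous function is determined by its unit level set. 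Here $W^\top$ is the $h$-orthogonal projection of $W$ onto $V$.

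The key steps are as follows. First, take $u+W\in V$ with $h(u,u)=1$, and decompose $u=u^{\top}+u^{\perp}$ $h$-orthogonally with respect to $V$. The condition $u+W\in V$ forces $u^{\perp}=-W^{\perp}$. Then $u+W=u^{\top}+W^{\top}$, and
\[
h(u^{\top},u^{\top})=1-h(W^{\perp},W^{\perp})=1/\lambda .
\]
Setting $\tilde u=u^{\top}$ gives $\tilde h(\tilde u,\tilde u)=\lambda h(u^\top,u^\top)=1$. Conversely, given $\tilde u\in V$ with $\tilde h(\tilde u,\tilde u)=1$, put $u=\tilde u-W^{\perp}$. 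Then $h(u,u)=1/\lambda+h(W^\perp,W^\perp)=1$, and $u+W=\tilde u+W^\top\in V$. This bijection proves the two indicatrices coincide.

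The one point needing care is that the new data are admissible, so that the description of the Randers indicatrix applies on $V$. We have $\lambda>0$ because $h(W,W)<1$ in the Randers case ($\Lambda>0$) and $h(W^\perp,W^\perp)\le h(W,W)<1$. We also need $\tilde h(W^{\top},W^{\top})<1$. This is equivalent to $\lambda\big(h(W,W)-h(W^\perp,W^\perp)\big)<1$, which rearranges to $h(W,W)<1$.

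Finally, homogeneity identifies the two metrics. Both $R|_V$ and the Randers metric with data $(\tilde h,W^\top)$ are positive and positively $2$-homogeneous on $V\setminus 0$, with the same unit level set. Every ray in $V$ meets each indicatrix exactly once, since the origin lies inside the ellipsoid because the wind is mild. Hence the two functions agree.
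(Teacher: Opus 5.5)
Your argument is correct and complete. The paper gives no proof of this lemma; it quotes it from \cite{alves2024isoparametric}, so there is no in-paper argument to compare yours against.

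Your proof is the natural coordinate-free one. The indicatrix of $R$ is the translated unit sphere $W+\{u : h(u,u)=1\}$. The only geometric input is that its slice by $V$ is again an $h$-sphere: it lies in $V$, is centered at the $h$-orthogonal projection $W^{\top}$, and has squared radius $1-h(W^{\perp},W^{\perp})=1/\lambda$. Your two inclusions verify exactly this. Your admissibility check, $\tilde h(W^{\top},W^{\top})<1 \iff h(W,W)<1$, places $(\tilde h,W^{\top})$ in the case $\Lambda>0$ of \eqref{EqZermelo1}. So the comparison metric is positive and $2$-homogeneous on all of $V\setminus 0$, which is what you need for equal unit level sets to force equal functions.

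Compared with computing from the explicit formula \eqref{EqZermelo1}, or restricting an $\alpha+\beta$ representation and converting back to navigation data, your route needs no algebra. It is also purely pointwise, so it applies verbatim with $V=T_pN$ or $V=\mathcal D_p$, which is how the lemma is used in Section~\ref{examPSonda}.

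It also shows why the statement is special to a quadratic $h$. For navigation data $(L,W)$ with a non-Euclidean Minkowski norm $L$, affine slices of $\{L=1\}$ need not be homothetic to $\{L=1\}\cap V$. So one should not expect induced data of the form $(\lambda L|_{V},W^{\top})$ in that generality.
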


\section{Conic Pseudo-Finslerian Mechanical Systems}\label{finslermechanical}

A \textbf{conic pseudo-Finslerian mechanical system} is a triple $\mathcal M=(M,\mathcal L,  \mathcal F)$ , where 
\begin{enumerate}
    \item $M$ is a differentiable manifold, called \textbf{configuration space};
    \item $\mathcal L=\frac{1}{2}L$ is the \textbf{kinetic energy} of a conic pseudo-Finslerian metric $L:\mathcal U\to \mathbb R$; and 
    \item $ \mathcal F:\mathcal U\to TM$ is an anisotropic vector field, called \textbf{external force}.
\end{enumerate}
A \textbf{motion} of the conic pseudo-Finslerian mechanical system, or \textbf{$\mathcal M$-motion}, is a $\mathcal U$-admissible smooth curve  $\gamma:I\to M$ that is a solution to \textbf{Newton's equation}
\begin{equation}\label{NewtonEq}
D^{\gamma'}_{\gamma}\gamma'=\mathcal  F(\gamma'),
\end{equation}
where $D_{\gamma}^{\gamma'}$ is the covariant derivative of $L$ along $\gamma$.  If $\gamma=(x_1,\dots,x_n)$ in a coordinate system and $\mathcal  F(v_p)=\sum_{i}F_i(v_p)\frac{\partial}{\partial x_i}(p),$ 
Newton's equations are equivalent to 
$$x_k''+\sum_{ij}\Gamma^k_{ij}(\gamma')x_i'x'_j=F_k(\gamma'),$$
for any $k\in \{1,\dots,n\}.$

\begin{remark}
     The geodesics of a conic pseudo-Finslerian metric $L$ on open subset $U\subset \mathbb R^n$ coincide with the motions of Euclidean mechanical system $(U,\frac{1}{2}\Vert.\Vert^2, \mathcal F)$ with effective inertial force
    $$\mathcal F(v_p):=- \sum_{ijk}\Gamma^k_{ij}(v_p) v_iv_j \frac{\partial}{\partial x_k}(p)$$ 
    where $\Gamma^k_{ij}$ are the Christoffel symbols of $L$. From this perspective, the geodesic equation \ref{geodesicequation} can be interpreted as Newton's second law in $\mathbb{R}^n$, with the geometrical terms (Christoffel symbols) playing the role of the components of an external force generated solely by  anisotropy of the metric.
\end{remark}

\subsection{Conservative Conic Pseudo-Finslerian Mechanical Systems}

A conic pseudo-Finslerian mechanical system $\mathcal M=(M,\mathcal L, \mathcal F)$ is  
 \textbf{conservative} if there exists a function $\phi:M \to \mathbb R$, called \textbf{potential energy}, such that  
$$ \mathcal F(v)=-\nabla^v \phi,$$
for all $v\in \mathcal U.$ The \textbf{mechanical energy} of this  mechanical system is the Lagrangian
$$\mathcal E=\mathcal L+\phi\circ \pi.$$

\begin{lemma}\label{LemaLagrangeano}
 Let $\mathcal M= (M,\mathcal L,\mathcal F)$  
 be a conservative conic pseudo-Finslerian mechanical system  with potential function $\phi$, and  let $\tilde{\mathcal L}=\mathcal L -\phi\circ \pi$.  Then, the $\mathcal M$-motion coincide with the $\tilde{\mathcal L}$-geodesics. Furthermore  $\mathcal E^{\tilde{ \mathcal L}}=\mathcal L+\phi\circ \pi$.
\end{lemma}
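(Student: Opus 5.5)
The energy statement is immediate from Lemma~\ref{L-phi} combined with Lemma~\ref{lema01}. The former gives $\mathcal E^{\tilde{\mathcal L}}=\mathcal E^{\mathcal L}+\phi\circ\pi$, and the latter gives $\mathcal E^{\mathcal L}=\mathcal L$, since $\mathcal L=\frac12 L$ is positively $2$-homogeneous. Therefore $\mathcal E^{\tilde{\mathcal L}}=\mathcal L+\phi\circ\pi$.

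For the correspondence between motions and $\tilde{\mathcal L}$-geodesics, I will compare the two systems of second-order ODEs in local coordinates. First, note that $\tilde{\mathcal L}$ differs from $\mathcal L$ by a function of the base point only. Hence $g^{\tilde{\mathcal L}}=g^{\mathcal L}=g$, so $\tilde{\mathcal L}$ is regular and Theorem~\ref{existencUnicit} applies to it. Also $\partial^2\tilde{\mathcal L}/\partial v_j\partial x_i=\partial^2\mathcal L/\partial v_j\partial x_i$ and $\partial\tilde{\mathcal L}/\partial x_j=\partial\mathcal L/\partial x_j-\partial\phi/\partial x_j$. Substituting into \eqref{equivaleeqEL}, a curve is an $\tilde{\mathcal L}$-geodesic if and only if
\[
x_k''+\sum_j g^{kj}(\gamma')\Big(\sum_i\frac{\partial^2\mathcal L}{\partial v_j\partial x_i}x_i'-\frac{\partial\mathcal L}{\partial x_j}\Big)=-\sum_j g^{kj}(\gamma')\frac{\partial\phi}{\partial x_j}.
\]
The right-hand side is exactly the $k$-th component of $-\nabla^{\gamma'}\phi$, by the coordinate expression of the anisotropic gradient.

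It then remains to identify the left-hand side with the $k$-th component of $D^{\gamma'}_\gamma\gamma'$, i.e. with $x_k''+\sum_{ij}\Gamma^k_{ij}(\gamma')x_i'x_j'$. This is the one step needing care. I will use that the geodesics of $L$ are by definition the $\mathcal L$-geodesics, and that they are characterized both by \eqref{equivaleeqEL} (with $\phi=0$) and by \eqref{geodesicequation}. Both are explicit second-order systems of the form $x''=G(x,x')$ with $G$ defined on all of $\mathcal U$. By Theorem~\ref{existencUnicit}, through every $v\in\mathcal U$ passes an $\mathcal L$-geodesic, which satisfies both systems. Evaluating both at $t=0$ gives
\[
\sum_j g^{kj}(v)\Big(\sum_i\frac{\partial^2\mathcal L}{\partial v_j\partial x_i}v_i-\frac{\partial\mathcal L}{\partial x_j}\Big)=\sum_{ij}\Gamma^k_{ij}(v)v_iv_j
\]
pointwise on $\mathcal U$, since the second derivative at $t=0$ is determined by either system. (Alternatively, one could verify this directly from the Koszul-type formula of Theorem~\ref{ChernConnection}, using $C_v(v,\cdot,\cdot)=0$ to kill the Cartan terms; this is the standard identification of the Chern geodesic spray with the Euler--Lagrange spray.)

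With this identity, the $\tilde{\mathcal L}$-Euler--Lagrange system becomes $D^{\gamma'}_\gamma\gamma'=-\nabla^{\gamma'}\phi$, which is Newton's equation \eqref{NewtonEq} with $\mathcal F=-\nabla\phi$. This proves the lemma; the main obstacle is only this identification of the sprays, and it is handled by the existence argument above.
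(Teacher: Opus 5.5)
Your proof is correct. The energy part coincides with the paper's (Lemma~\ref{L-phi} together with Lemma~\ref{lema01}), but for the correspondence between motions and $\tilde{\mathcal L}$-geodesics you take a different route.

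The paper argues intrinsically via the first variation. It writes $\frac{d}{ds}|_{s=0}\int\tilde{\mathcal L}(\gamma_s')\,dt$ as $\int\{g_{\gamma'}(D^{\gamma'}_\gamma V,\gamma')-d\phi(V)\}\,dt$, tacitly using torsion-freeness to exchange the $s$- and $t$-derivatives. It then integrates by parts using metric compatibility of the Chern connection and $C_{\gamma'}(\gamma',\cdot,\cdot)=0$. This gives $-\int g_{\gamma'}(V,D^{\gamma'}_\gamma\gamma'+\nabla^{\gamma'}\phi)\,dt$, and nondegeneracy of $g_{\gamma'}$ finishes the argument.

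You instead use the coordinate Euler--Lagrange system of Theorem~\ref{existencUnicit}. Since $\tilde{\mathcal L}$ and $\mathcal L$ share the fundamental tensor and the mixed derivatives, the potential contributes exactly the components of $-\nabla^{\gamma'}\phi$. You then identify the remaining Lagrangian spray with $\sum_{ij}\Gamma^k_{ij}(v)v_iv_j$ by evaluating, at $t=0$, the $\mathcal L$-geodesic through an arbitrary $v\in\mathcal U$.

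That step is sound, but be aware of what it rests on. It uses the existence part of Theorem~\ref{existencUnicit}, and the assertion, stated without proof in the paper, that $\mathcal L$-geodesics are precisely the solutions of $D^{\gamma'}_\gamma\gamma'=0$. In effect you reduce the lemma to its own special case $\phi=0$. The paper's computation proves the general case and that special case at once, without coordinates or ODE existence.

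What your route buys is a clear structural picture: a base-point potential perturbs the Euler--Lagrange spray by precisely a gradient term and leaves the spray itself untouched. It also avoids covariant differentiation along two-parameter variations altogether. Your parenthetical alternative via the Koszul-type formula would make it self-contained.
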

\begin{proof}Let $\gamma_s$ be a variation with fixed endpoints of $\gamma$ and $V$ its variational vector field. Then, by the metric compatibility of the $\nabla$ and the property $C_{\gamma'}(\gamma',.,.)=0$, it follows that
\begin{eqnarray*}
    \frac{d}{ds}|_{s=0}\int_a^b\tilde{\mathcal L}(\gamma_s')dt&=& \int_a^b\{\frac{1}{2}\frac{d}{ds}|_{s=0}L(\gamma'_s)-d\phi(V)\}dt \\
    &=&\int_a^b\{g_{\gamma'}(\frac{d}{ds}|_{s=0}\gamma_s',\gamma')-d\phi(V)\}dt\\
   &=&\int_a^b\{g_{\gamma'}(\frac{d}{dt}V,\gamma')-d\phi(V)\}dt\\
    &=& \int_a^b\{\frac{d}{dt}g_{\gamma'}(V,\gamma')-g_{\gamma'}(V,D^{\gamma'}_{\gamma}\gamma')-d\phi(V)\}dt\\
    &=& -\int_a^b\{g_{\gamma'}(V,D^{\gamma'}_{\gamma}\gamma')+d\phi(V)\}dt.
\end{eqnarray*}
    Therefore $\gamma$ is $\tilde{ \mathcal L}$-geodesic if and only if $$g_{\gamma'}(D^{\gamma'}_{\gamma}\gamma'+\nabla^{\gamma'}\phi,V)=g_{\gamma'}(V,D^{\gamma'}_{\gamma}\gamma')+d\phi(V)=0$$ for each $V$  vector field along $\gamma$.

Finally, the last statement follows directly from Lemma\ref{L-phi} and Lemma~\ref{lema01}.
\end{proof}

\begin{theorem}[Conservation of energy]  In a conservative conic pseudo-Finslerian mechanical system the mechanical energy is constant
 along any motion.
\end{theorem}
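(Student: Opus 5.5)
The plan is to reduce the statement to the abstract energy conservation result, Theorem~\ref{conservacaodaEnergia}, by means of Lemma~\ref{LemaLagrangeano}. Let $\mathcal M=(M,\mathcal L,-\nabla\phi)$ be conservative with potential $\phi$, and let $\gamma$ be an $\mathcal M$-motion. By Lemma~\ref{LemaLagrangeano}, $\gamma$ is an $\tilde{\mathcal L}$-geodesic for the Lagrangian $\tilde{\mathcal L}=\mathcal L-\phi\circ\pi$, which is defined and smooth on $\mathcal U$, and $\gamma$ is $\mathcal U$-admissible. Theorem~\ref{conservacaodaEnergia} then says that $\mathcal E^{\tilde{\mathcal L}}(\gamma'(t))$ is constant in $t$. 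The same lemma identifies $\mathcal E^{\tilde{\mathcal L}}=\mathcal L+\phi\circ\pi=\mathcal E$; this identification comes from Lemma~\ref{L-phi} together with Lemma~\ref{lema01}, which gives $\mathcal E^{\mathcal L}=\mathcal L$ by $2$-homogeneity. Hence $\mathcal E(\gamma')$ is constant.

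As a sanity check, and as an alternative self-contained argument in case one prefers not to rely on the abstract theorem, I would also differentiate directly. By Lemma~\ref{proptfund1}, $\frac{d}{dt}\mathcal L(\gamma')=\frac12\frac{d}{dt}g_{\gamma'}(\gamma',\gamma')$. Using the derivative formula for $g_{\gamma'}$ along $\gamma$ with $X=Y=\gamma'$, this equals $g_{\gamma'}(D^{\gamma'}_\gamma\gamma',\gamma')+C_{\gamma'}(\gamma',\gamma',D^{\gamma'}_\gamma\gamma')$. The Cartan term vanishes because $C_v(v,\cdot,\cdot)=0$. Newton's equation \eqref{NewtonEq} then gives $g_{\gamma'}(-\nabla^{\gamma'}\phi,\gamma')=-d\phi(\gamma')=-\frac{d}{dt}\phi(\gamma(t))$. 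Summing, $\frac{d}{dt}\mathcal E(\gamma')=0$.

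The only delicate point is that everything takes place on the conic domain $\mathcal U$ rather than on all of $TM$. Both the energy function and the derivative identity require $\gamma'\in\mathcal U$. This holds by definition of a motion, so I do not expect a genuine obstacle; the main care is checking that the Cartan-tensor term drops out, which follows from the stated symmetry and the identity $C_v(v,\cdot,\cdot)=0$.
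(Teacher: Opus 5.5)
Your proof is correct and is essentially the paper's argument: Lemma~\ref{LemaLagrangeano} shows that motions are $\tilde{\mathcal L}$-geodesics with $\mathcal E^{\tilde{\mathcal L}}=\mathcal L+\phi\circ\pi$, and Theorem~\ref{conservacaodaEnergia} then gives constancy of the energy. Your direct differentiation, which uses metric compatibility of the Chern connection, $C_v(v,\cdot,\cdot)=0$ and Newton's equation, is also valid as a self-contained check.
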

\begin{proof}
The result follows from Lemma~\ref{LemaLagrangeano} and Theorem~\ref{conservacaodaEnergia}.  
\end{proof}

Let $\mathcal M=(M,\mathcal L=\tfrac{1}{2}L,-\nabla^v \phi)$ be a  conservative conic pseudo-Finslerian mechanical system and $e\in \mathbb R$ such that 
$$M_e:=\{p\in M; e-\phi(p)>0\}\neq \emptyset.$$
The \textbf{Jacobi metric} on $M_e$ is the conic pseudo-Finslerian metric given by
$$L^J(v):=(e-\phi\circ \pi(v))L(v)$$
for all $v\in \mathcal U\cap TM_e .$

\begin{theorem}\label{Jacobitheorem}
 The motions of a conservative conic pseudo-Finslerian mechanical system $\mathcal M=(M,\mathcal L=\tfrac{1}{2}L ,-\nabla ^v\phi)$ with mechanical energy $e$ are, up to reparametrization, geodesics
 of the Jacobi metric $L^J$ on $M_e$.
\end{theorem}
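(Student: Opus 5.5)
The plan is to combine Corollary~\ref{lema2}, with conformal factor $f=e-\phi$, and the reparametrization criterion of Lemma~\ref{Lema1}. Let $\gamma$ be an $\mathcal M$-motion with energy $e$. Then $\gamma$ lies in $M_e$: by energy conservation, $\tfrac12 L(\gamma')+\phi(\gamma)=e$, so $e-\phi(\gamma)=\tfrac12 L(\gamma')$. (If $L$ is indefinite, one must also restrict attention to motions with $L(\gamma')>0$; I would note this at the start.) By Lemma~\ref{Lema1} applied to the metric $L^J=fL$, it suffices to show that $\widetilde D^{\gamma'}_\gamma\gamma'=h(t)\gamma'$ for some function $h$, where $\widetilde D$ is the covariant derivative of the Chern connection of $L^J$.

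The first step is to pass from the field identity of Corollary~\ref{lema2} to the curve identity
\[
\widetilde D^{\gamma'}_\gamma\gamma' = D^{\gamma'}_\gamma\gamma' + \frac{df(\gamma')}{f(\gamma)}\gamma' - \frac{L(\gamma')}{2f(\gamma)}\nabla^{\gamma'}f .
\]
The corollary is stated for $\nabla^V_VV$ with $V$ a vector field. The difference $\widetilde\nabla-\nabla$, evaluated at $(v,v,v)$, is tensorial in its arguments, and the expressions involve only the values $V(p)=v$ and the first jet of $V$ along $v$. Hence the identity can be applied pointwise along $\gamma$, either through the coordinate expression of $D$ or by extending $\gamma'$ locally to a vector field when $\gamma'\neq 0$. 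I expect this justification, together with the Cartan-tensor terms that vanish because $C_v(v,\cdot,\cdot)=0$, to be the main technical point. My first approach is the coordinate one: the difference of Christoffel symbols contracted with $\gamma'_i\gamma'_j$ is exactly what the Koszul computation in the corollary produces.

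Next, substitute Newton's equation $D^{\gamma'}_\gamma\gamma'=-\nabla^{\gamma'}\phi=\nabla^{\gamma'}f$; this uses the fact that the anisotropic gradient is linear in the function and that $\nabla e=0$. Then use $f(\gamma)=\tfrac12L(\gamma')$. The coefficient of $\nabla^{\gamma'}f$ becomes $1-\tfrac{L(\gamma')}{2f}=0$, leaving
\[
\widetilde D^{\gamma'}_\gamma\gamma'=\frac{df(\gamma')}{f(\gamma)}\gamma'.
\]
By Lemma~\ref{Lema1}, $\gamma$ is therefore a positive reparametrization of an $L^J$-geodesic. Explicitly, $h=\tfrac{d}{dt}\log f(\gamma)$, so the new parameter satisfies $s'=c\,(e-\phi(\gamma))$.

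For the converse, which the statement suggests is intended, start from an $L^J$-geodesic $\sigma$ with $L^J(\sigma')$ a positive constant and reparametrize it as $\gamma=\sigma\circ s$. Choose $s$ so that $\tfrac12L(\gamma')=e-\phi(\gamma)$; this requires $(s')^2=2(e-\phi)^2/L^J(\sigma')$ up to constants. With this choice the energy equals $e$. Running the same identity backwards, using Lemma~\ref{Lema1} and the relation $L=2f$ along $\gamma$, shows that $D^{\gamma'}_\gamma\gamma'-\nabla^{\gamma'}f$ is a multiple of $\gamma'$. Pairing with $g_{\gamma'}(\cdot,\gamma')$ and using energy conservation, that multiple is forced to be zero, so $\gamma$ satisfies Newton's equation.
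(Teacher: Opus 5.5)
Your proposal is correct and follows the paper's proof essentially step for step. You apply Corollary~\ref{lema2} with conformal factor $e-\phi$ along $\gamma$, use $L(\gamma')=2(e-\phi(\gamma))$ and Newton's equation $D^{\gamma'}_\gamma\gamma'=\nabla^{\gamma'}(e-\phi)$ to cancel the gradient term, and conclude with Lemma~\ref{Lema1}. The paper leaves implicit both your coordinate justification for applying the field identity along a curve and your converse, where you fix the reparametrization so the energy equals $e$ and then pair with $\gamma'$ to show the leftover multiple vanishes.
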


\begin{proof}
Let $\gamma:I\to M$ be an admissible smooth curve. By Corollary~\ref{lema2}, 
\begin{equation} \label{eq111}
    \tilde D_{\gamma}^{\gamma'}\gamma'= D_{\gamma}^{\gamma'}\gamma'+\frac{(\Phi\circ \gamma)'}{\Phi\circ \gamma}\gamma'-\frac{g_{\gamma'}(\gamma',\gamma')}{2\Phi\circ \gamma}\nabla^{\gamma'}\Phi,
\end{equation}
where $\tilde D$ is the anisotropic covariant derivative of $ L^J$ and $\Phi(p):=e-\phi(p)$. If $\mathcal E(\gamma')$ is constant equal to $e$, then 
$g_{\gamma'}(\gamma',\gamma')=L(\gamma')=2(e-\phi(\gamma))=2\Phi\circ \gamma.$
 Thus
\begin{equation}\label{eq11}
    \tilde D_{\gamma}^{\gamma'}\gamma'= D_{\gamma}^{\gamma'}\gamma'+\frac{(\Phi\circ \gamma)'}{\Phi\circ \gamma}\gamma'-\nabla^{\gamma'}\Phi.
\end{equation}
Since 
$-d\phi=d\Phi$,
the Newton equation becomes $D_{\gamma}^{\gamma'}\gamma'=\nabla^{\gamma'}\Phi,$
because 
$$g_{\gamma'}(  D_{\gamma}^{\gamma'}\gamma',.)=-d\phi_{\gamma}(.)=d\Phi_{\gamma}(.).$$
By Equation~\ref{eq11}, $\gamma$ is an  $\mathcal M$-motion if and only if $$  \tilde D_{\gamma}^{\gamma'}\gamma'=\frac{(\Phi\circ \gamma)'}{\Phi\circ \gamma}\gamma'.$$
This concludes the proof by Lemma~\ref{Lema1}.\end{proof}
\begin{example}[Anisotropic Cosmological Drift as a Zermelo System] To illustrate the physical relevance of our construction, consider a simplified cosmological model
inspired by  Finsler–Randers anisotropic framework discussed in \cite{praveen2026finsler}. Let  $M$  be a four-dimensional spacetime slice where the background metric is given by a flat FLRW-like metric  $a_{ij}$ and let  $b = b_1(t) dt$ be a time-dependent $1$-form representing a cosmological directional drift (e.g., associated with an anisotropic dark sector or a primordial vector field). Under our framework, a conservative mechanical system with a potential  $\phi(x)$ on this conic pseudo-Finslerian space yields a modified navigation problem. By applying Theorem \ref{Jacobitheorem}, the trajectories of a test particle with fixed energy e  correspond to the geodesics of a Jacobi-Zermelo metric. Here, the external drift  $W = b^i \partial_i$  acts precisely as a "cosmological wind" that is rescaled by energy-potential factor  $\sqrt{e - \phi(x)}$. While standard cosmological approaches often rely on an osculating Riemannian metric fixed along a specific observer's four-velocity  $y(x)$ to evaluate the field equations, our formulation provides
the exact, parameter-free evolution of the particle's trajectories. The resulting Jacobi metric completely characterizes how the combined effect of the potential  $\phi(x)$  and the cosmic anisotropy $b_i(t)$ deforms the conic domain of allowed physical motions.
\end{example}

Let $L:\mathcal U\to \mathbb R$ be a conic pseudo-Finslerian metric on $M$.   Let $\gamma$ be a $\mathcal U$-admissible smooth curve and $V\in \mathfrak X(M)$ and $\mathcal L=\frac{1}{2}L$. The \textbf{momentum in the $V$-direction} is 
$$\omega_{\gamma}(V)=g_{\gamma'}(\gamma',V\circ \gamma)=\ell^{\mathcal L}(\gamma')(V\circ \gamma)$$
and the \textbf{work in the $V$-direction} is 
$$w_{\gamma}(V)=g_{\gamma'}(D_{\gamma}^{ \gamma'}\gamma',V\circ \gamma).$$
By compatibility with the metric of the Chern connection, we have 
\begin{equation}\label{eq22}
    w_{\gamma}(V)=\gamma'(\omega_{\gamma}(V))-\omega_{\gamma}(\nabla^{\gamma'}_{\gamma'}V).
\end{equation}
And if $\gamma$ is a motion of the conservative conic pseudo-Finslerian system $\mathcal M=(M,\mathcal L,-\nabla^v\phi)$, then  
\begin{equation}\label{eqdw}
    w_{\gamma}=-d\phi.
\end{equation}

\begin{theorem}[Momentum conservation] \label{MomentumConservation} Let $L$ be a conic pseudo-Finslerian metric and $\mathcal L=\frac{1}{2}L$.  
Let $\gamma$ be a motion  of a conservative conic pseudo-Finslerian mechanical system $\mathcal M =(M,\mathcal L,-\nabla^v \phi)$ and $V\in \mathfrak X(M)$ be an $L$-Killing vector field such that
    $w_{\gamma}(V)=0.$ 
Then $\omega_{\gamma}(V)$ is constant and $w_{\gamma}(\nabla_{\gamma'}^{\gamma'}V)=0$.
\end{theorem}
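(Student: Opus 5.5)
The plan is to derive both conclusions from equation \eqref{eq22} together with one identity coming from the Killing property. Equation \eqref{eq22} gives
\[
w_{\gamma}(V)=\gamma'(\omega_{\gamma}(V))-\omega_{\gamma}(\nabla^{\gamma'}_{\gamma'}V).
\]
If we show that $\omega_{\gamma}(\nabla^{\gamma'}_{\gamma'}V)=g_{\gamma'}(\gamma',\nabla^{\gamma'}_{\gamma'}V)=0$ for a Killing field $V$, then the hypothesis $w_\gamma(V)=0$ forces $\frac{d}{dt}\omega_\gamma(V)=0$. This is exactly the first claim.

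The key step is the Finslerian Killing identity $g_v(v,\nabla^v_vV)=0$ for every $v\in\mathcal U$. I would prove it as follows. Let $\varphi_s$ be the (local) flow of $V$. Since it acts by isometries, $L(d\varphi_s(v))=L(v)$, and differentiating at $s=0$ gives $\frac{d}{ds}\big|_{s=0}L(d\varphi_s v)=0$.

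Next, take an extension $X$ of $v$ near $p=\pi(v)$ that is invariant under the flow, or equivalently consider the curve $s\mapsto d\varphi_s(v)$, which is a vector field along the integral curve $\sigma$ of $V$ through $p$. Using the derivative formula for $g$ along curves (the compatibility rule with the Cartan term) and $C_v(v,\cdot,\cdot)=0$, we get
\[
0=\frac{d}{ds}L(d\varphi_s v)=2g_v\big(D_\sigma^{v}(d\varphi_s v),v\big).
\]
The field $d\varphi_s v$ is the pushforward of $v$, so its Lie derivative along $V$ vanishes, i.e.\ $[V,X]=0$ for an invariant extension $X$. Torsion-freeness then gives $D^v_\sigma X=\nabla^v_VX=\nabla^v_XV=\nabla^v_vV$. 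Hence $g_v(\nabla^v_vV,v)=0$.

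Alternatively, one can apply Noether (Lemma~\ref{Noether}) to $\mathcal L$ itself along geodesics. However, the direct computation is cleaner, since $\gamma$ is not a geodesic here.

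Applying the identity with $v=\gamma'$ gives $\omega_\gamma(\nabla^{\gamma'}_{\gamma'}V)=0$, so $\omega_\gamma(V)$ is constant.

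For the second claim, note that $w_\gamma(W)=-d\phi(W)$ by \eqref{eqdw}, and this is linear in $W$ pointwise. The field $\nabla^{\gamma'}_{\gamma'}V$ is only a field along $\gamma$, not in $\mathfrak X(M)$. I would therefore interpret $w_\gamma(\nabla^{\gamma'}_{\gamma'}V)$ pointwise as $g_{\gamma'}(D^{\gamma'}_\gamma\gamma',\nabla^{\gamma'}_{\gamma'}V)$. Writing $D^{\gamma'}_\gamma\gamma'=-\nabla^{\gamma'}\phi$, this equals $-d\phi(\nabla^{\gamma'}_{\gamma'}V)$.

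To show it vanishes, I would differentiate the identity $w_\gamma(V)=0$ in $t$, or rather differentiate $g_{\gamma'}(\gamma',\nabla^{\gamma'}_{\gamma'}V)=0$ along $\gamma$. By compatibility and $C_{\gamma'}(\gamma',\cdot,\cdot)=0$, we obtain
\[
0=g_{\gamma'}(D_\gamma^{\gamma'}\gamma',\nabla^{\gamma'}_{\gamma'}V)+g_{\gamma'}\big(\gamma',D^{\gamma'}_\gamma(\nabla^{\gamma'}_{\gamma'}V)\big).
\]
It then remains to show that the second term vanishes. For this I would use the second-order Killing identity, namely that the symmetric part of $g_v(\cdot,\nabla^v_{\cdot}V)$ vanishes (polarizing the first identity) together with its covariant derivative. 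Alternatively, I would combine it with differentiating $0=w_\gamma(V)$ via \eqref{eq22}.

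I expect the main obstacle to be exactly this step. The second assertion needs a careful computation of $\frac{d}{dt}g_{\gamma'}(\gamma',\nabla^{\gamma'}_{\gamma'}V)$, where the anisotropic Cartan terms and the dependence of the reference vector on $t$ must be tracked. The first assertion is comparatively routine once the Killing identity is established.
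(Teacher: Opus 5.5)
Your proof of the first conclusion is correct, but it runs in the opposite order from the paper's. The paper first gets constancy of $\omega_\gamma(V)$ from Noether's lemma applied to $\tilde{\mathcal L}=\mathcal L-\phi\circ\pi$. It declares $\tilde{\mathcal L}$ invariant under the flow of $V$ because ``$\phi$ is constant along the integral curves of $V$'', and only afterwards reads the second conclusion off \eqref{eq22}. You instead prove the pointwise Killing identity $g_v(v,\nabla^v_vV)=0$ and let \eqref{eq22} give $\frac{d}{dt}\omega_\gamma(V)=w_\gamma(V)=0$. Your route is the more economical one. It uses $d\phi(V)=0$ only along $\gamma$, which is exactly what the hypothesis $w_\gamma(V)=-d\phi(V\circ\gamma)=0$ provides; invariance of $\phi$ under the whole flow of $V$ does not follow from that hypothesis. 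One small repair is needed. At a zero of $V$, an extension $X$ with $X(p)=v$ and $[V,X]=0$ need not exist (think of rotations of $\mathbb R^2$ at the origin). So use an arbitrary extension $X$: then $(\varphi_s)_*X=X-s[V,X]+O(s^2)$ gives $D^v_\sigma(d\varphi_s v)|_{s=0}=\nabla^v_VX-[V,X]=\nabla^v_XV=\nabla^v_vV$ in all cases.

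The genuine gap is the second conclusion. It cannot be closed along the lines you sketch, because the literal reading $w_\gamma(\nabla^{\gamma'}_{\gamma'}V)=g_{\gamma'}(D^{\gamma'}_\gamma\gamma',\nabla^{\gamma'}_{\gamma'}V)=0$ is false. Take $M=\mathbb R^2$, $L(v)=|v|^2$, $\phi(x)=\tfrac12|x|^2$, $V(x,y)=(-y,x)$ and $\gamma(t)=(\cos t,\sin t)$. Then $\gamma''=-\gamma=-\nabla^{\gamma'}\phi$, $w_\gamma(V)=0$ (indeed $d\phi(V)\equiv 0$ on all of $M$) and $\omega_\gamma(V)\equiv 1$. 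But $\nabla_{\gamma'}V=-\gamma$, so $w_\gamma(\nabla_{\gamma'}V)=\langle-\gamma,-\gamma\rangle=1$. Correspondingly, the term $g_{\gamma'}(\gamma',D_\gamma(\nabla_{\gamma'}V))=\langle\gamma',-\gamma'\rangle=-1$ that you hoped to show vanishes does not. What the paper's proof actually derives is $\gamma'(\omega_\gamma(V))-w_\gamma(V)=0$. By \eqref{eq22} this quantity is $\omega_\gamma(\nabla^{\gamma'}_{\gamma'}V)=g_{\gamma'}(\gamma',\nabla^{\gamma'}_{\gamma'}V)$, not $w_\gamma(\nabla^{\gamma'}_{\gamma'}V)$, so the $w$ in the second conclusion is a misprint for $\omega$. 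That corrected statement is precisely your Killing identity at $v=\gamma'$, so you had already proved it. What is missing is recognizing this, rather than pursuing the literal version, which no computation can establish.
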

\begin{proof}
     Let $\varphi_s$ be the flow of
the $L$-Killing vector field $V$. As $\varphi_s$ are local $L$-isometries, each $\varphi_s$ will preserves the
Lagrangian $\mathcal L$, in particular, 
\begin{equation}
   \mathcal  L(d(\varphi_s)(\gamma'))=\mathcal L(\gamma').
\end{equation}
    As $d\phi(\gamma')=w_{\gamma}(V)=0$, $\phi$ is constant along the integral curves of $V$: 
    $$\phi(p)=\phi(\varphi_s(p)).$$ 
    Therefore, the Lagrangian $\tilde{\mathcal L}=\mathcal L-\phi\circ \pi$
    is invariant under the flow of $V$ and 
\begin{equation}
        w_{\gamma}(V)=d\phi(V)=0,\end{equation}
        by Equation~\ref{eqdw}. As $\gamma$ is a $\tilde{\mathcal L}$-geodesic (Lemma~\ref{LemaLagrangeano}), by  Lemma~\ref{Noether} and Lemma~\ref{L-phi}, $$ \ell^{\tilde{\mathcal  L}}(\gamma')(V)=\ell^{\mathcal  L}(\gamma')(V)=\omega_{\gamma}(V)$$ is constant. Therefore  we have that $w_{\gamma}(\nabla_{\gamma'}^{\gamma'}V)= \gamma'(\omega_{\gamma}(V)) -w_{\gamma}(V)=0$. 
\end{proof}

\subsection{Completeness}

In this section, we investigate the global-in-time existence of trajectories for
conic Finslerian mechanical systems. We first establish completeness criteria for
conformally deformed Finslerian metrics and Jacobi-Finslerian metrics.
We then apply these geometric results to guarantee the temporal completeness of
both conservative and dissipative mechanical motions. We introduce an analytical framework that adapts classical completeness criteria to our anisotropic setting \cite{sistemcompleteness,gordon1973analytical}. In standard Riemannian and Finslerian geometry, the global existence of geodesics (geodesic completeness) is intimately related to the behavior of proper functions with bounded gradients, which prevent curves from escaping to infinity in finite time. To extend these robust geometric techniques to the asymmetric conic domain of our mechanical system and guarantee that the $\mathcal M$-motions are defined for all $t > 0$, we define a directional analytical analogue termed a positively complete function.

\begin{proposition}\label{PropF}
If $(M,L)$ is a Finslerian manifold and $f$ is a proper function on $M$, then
$$\tilde{L}(v) = L(v) + [df(v)]^2, \quad \forall v \in TM$$
defines a complete Finslerian metric on $M$.
\end{proposition}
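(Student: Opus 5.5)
The proof has two parts. First we check that $\tilde L$ is a Finslerian metric on $TM\setminus 0$. Then we show that every forward (and backward) metric ball of $\tilde L$ sits inside a sublevel band of $f$, so properness of $f$ gives completeness.

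\emph{Step 1 ($\tilde L$ is Finslerian).} Smoothness of $\tilde L$ on $TM\setminus 0$ is clear, since $v\mapsto [df(v)]^2$ is smooth on all of $TM$ (it is a quadratic form in $v$). Positive $2$-homogeneity holds because $[df(\lambda v)]^2=\lambda^2[df(v)]^2$. For the fundamental tensor, differentiate directly from the definition:
\[
g^{\tilde L}_v(u,w)=g^L_v(u,w)+df(u)\,df(w).
\]
The first term is positive definite because $L$ is Finslerian. The second term is positive semidefinite, so $g^{\tilde L}_v$ is positive definite for every $v\neq 0$. Hence $\tilde L$ is a Finslerian metric, and it satisfies $\tilde L>0$ on $TM\setminus 0$.

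\emph{Step 2 (comparison with $f$).} Write $\tilde F=\sqrt{\tilde L}$. Then $\tilde F(v)\ge |df(v)|$ for every $v$. For any piecewise smooth curve $\gamma:[a,b]\to M$ from $p$ to $q$ we get
\[
\ell_{\tilde F}(\gamma)=\int_a^b\tilde F(\gamma')\,dt\ \ge\ \int_a^b|(f\circ\gamma)'|\,dt\ \ge\ |f(q)-f(p)|.
\]
Let $\tilde d$ be the (possibly nonsymmetric) distance of $\tilde F$. It follows that $|f(q)-f(p)|\le \min\{\tilde d(p,q),\tilde d(q,p)\}$. Consequently every forward ball $B^+(p,r)$ and every backward ball $B^-(p,r)$ is contained in $f^{-1}([f(p)-r,f(p)+r])$. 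That set is compact because $f$ is proper. Therefore every closed forward-bounded (or backward-bounded) subset of $M$ is compact.

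\emph{Step 3 (conclusion).} By the Finslerian Hopf--Rinow theorem, compactness of closed forward-bounded sets implies forward completeness of $\tilde F$, and the backward version gives backward completeness.

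For a self-contained argument avoiding Hopf--Rinow, proceed directly. Let $\gamma:[0,b)\to M$ be a unit-speed $\tilde L$-geodesic with $b<\infty$. By Step 2, $|f(\gamma(t))-f(\gamma(0))|\le t<b$, so $\gamma$ stays in the compact set $K=f^{-1}([f(\gamma(0))-b,f(\gamma(0))+b])$. Energy conservation for geodesics (Theorem~\ref{conservacaodaEnergia} with Lemma~\ref{lema01}) gives $\tilde L(\gamma')\equiv 1$. Hence $\gamma'$ stays in the $\tilde L$-unit sphere bundle over $K$, which is compact. The geodesic spray is a smooth vector field on $TM\setminus 0$ (Theorem~\ref{existencUnicit}). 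An integral curve of a smooth vector field that remains in a compact set on a finite interval extends past its endpoint, so $\gamma$ extends beyond $b$. The same argument applied to negative times gives backward completeness.

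The main obstacle is this last step. One must invoke the correct Finslerian Hopf--Rinow statement for nonreversible metrics, or, in the direct route, justify that the unit sphere bundle over a compact set is compact and stays away from the zero section where the spray may fail to be smooth. The bound $\tilde L(\gamma')\equiv1$ handles the zero section.
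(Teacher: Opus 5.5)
Your proof is correct. Its core estimate is the same as the paper's: $\tilde F\ge |df|$ makes $f$ $1$-Lipschitz for $\tilde d$ in both directions, and properness of $f$ then confines everything to a compact band. The packaging differs in two places.

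For the Finslerian property, the paper realizes $\tilde L$ as the pullback of the metric induced by $L_0(v,r)=L(v)+r^2$ on the graph $G$ of $f$ in $M\times\mathbb{R}$. You instead compute $g^{\tilde L}_v=g^L_v+df\otimes df$ directly. Your route is shorter and avoids a point the graph construction passes over: unless $L$ is Riemannian, $L_0$ is not smooth at the vectors $(0,r)$, $r\neq 0$. So $L_0$ is really a conic Finslerian metric on $\{(v,r): v\neq 0\}$, which is harmless because $TG\setminus 0$ lies in that cone.

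For completeness, the paper argues straight from the definition. For a forward Cauchy sequence, $\{f(p_n)\}$ is Cauchy in $\mathbb{R}$, properness gives a convergent subsequence, and a forward Cauchy sequence with a convergent subsequence converges. You instead pass through bounded sets and Hopf--Rinow, or extend unit-speed geodesics using compactness of the unit sphere bundle over $K$. Your direct route gives geodesic completeness without Hopf--Rinow, while the paper's gives Cauchy completeness without it.

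Your Step 2 already contains the paper's argument. A forward (or backward) Cauchy sequence is forward (or backward) bounded, hence lies in a compact band and has a convergent subsequence. So the appeal to Hopf--Rinow is not actually needed.
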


\begin{proof} 
The map $L_0(v,r) = L(v) + r^2$ defines a Finslerian metric on $M \times \mathbb{R}$. Let $\hat{L}$ be the Finslerian metric induced by $L_0$ on the graph $G = \{(p,f(p)) : p \in M\} \subset M \times \mathbb{R}$. Since $\varphi(p) = (p,f(p))$ defines a diffeomorphism from $M$ onto $G$, it follows that
$$ \tilde{L}(v) = \varphi^*\hat{L} = L(v) + (df(v))^2 $$
is a Finslerian metric on $M$.

Finally, let us verify that $(G, L_0)$ is complete, which implies that $\tilde{L}$ is also complete. If $\{(p_n, f(p_n))\}$ is a forward Cauchy sequence in $G$, then $\{f(p_n)\}$ is a Cauchy sequence in $\mathbb{R}$, since $L_0(v,r) \geq r^2$ for any $(v,r) \in T(M \times \mathbb{R})$. Let $z \in \mathbb{R}$ be the limit of $\{f(p_n)\}$. In particular, $\{z, f(p_1), f(p_2), \dots\}$ is a compact set. Since $f$ is a proper function, its inverse image $f^{-1}(\{z, f(p_1), f(p_2), \dots\})$ is compact. Since $\{p_1, p_2, \dots\} \subset f^{-1}(\{z, f(p_1), f(p_2), \dots\})$, it admits a convergent subsequence. Consequently, $\{(p_n, f(p_n))\}$ contains a forward convergent subsequence and is therefore forward convergent, as it is a forward Cauchy sequence.

Analogously, we prove that $(M,\tilde{L})$ is backward complete.
\end{proof}

\begin{corollary}\label{Cor1}
Let $\tilde{L}$ be a Finslerian metric. If there exists a proper function $f: M \to \mathbb{R}$ such that 
$\tilde{L} - (df)^2$
defines a Finslerian metric, then $\tilde{L}$ is complete.
\end{corollary}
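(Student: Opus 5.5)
The plan is to reduce Corollary~\ref{Cor1} directly to Proposition~\ref{PropF}. Set $L := \tilde{L} - (df)^2$. By hypothesis $L$ is a Finslerian metric on $M$, and $f$ is proper. Proposition~\ref{PropF} therefore applies to the pair $(L,f)$. It shows that
\[
L(v) + [df(v)]^2 = \tilde{L}(v) - (df(v))^2 + (df(v))^2 = \tilde{L}(v)
\]
defines a complete Finslerian metric on $M$. Hence $\tilde{L}$ is complete, in both the forward and the backward sense, since the proof of Proposition~\ref{PropF} establishes both.

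The key steps are three, in this order. First, check that the function $L=\tilde L-(df)^2$ on $TM$ meets the standing assumptions of Proposition~\ref{PropF}. It is defined on all of $TM$, it is smooth away from the zero section, and it is a Finslerian metric in the sense of Section~\ref{Finsler}. All of this is exactly the hypothesis, so nothing needs to be proved. Second, invoke Proposition~\ref{PropF} with this $L$ and the given proper $f$. Third, observe the algebraic identity displayed above, so that the complete metric produced by the proposition is $\tilde L$ itself.

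There is no real obstacle here. The only point deserving a remark is that completeness is a property of the metric function itself, forward and backward Cauchy sequences with respect to the associated distance. Since the metric obtained from Proposition~\ref{PropF} coincides pointwise with $\tilde L$, its completeness transfers verbatim and no comparison of distances is needed.
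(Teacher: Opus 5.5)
Your proposal is correct and is the same argument as the paper's proof. Both set $L=\tilde L-(df)^2$, apply Proposition~\ref{PropF} to the Finslerian metric $L$ and the proper function $f$, and observe that $L+(df)^2=\tilde L$, so $\tilde L$ is complete.
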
 

\begin{proof}
If $L = \tilde{L} - (df)^2$ is a Finslerian metric, then $\tilde{L} = L + (df)^2$ is complete by Proposition~\ref{PropF}.
\end{proof}

\begin{lemma}\label{lem1}
Let $\tilde{L}$ be a Finslerian metric. Then $L = \tilde{L} - (df)^2$ defines a Finslerian metric if and only if $g^{\tilde{L}}_v(\widetilde{\nabla}^v f,\widetilde{\nabla}^v f) < 1$ for each $v\in TM\setminus 0$, where $\widetilde{\nabla} f$ is the anisotropic $\tilde{L}$-gradient of $f$.
\end{lemma}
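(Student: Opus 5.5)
The plan is to reduce the statement to a pointwise linear-algebra fact about the fundamental tensors. The functions $\tilde L$ and $(df)^2$ are both smooth on $TM\setminus 0$ and positively $2$-homogeneous, so $L=\tilde L-(df)^2$ automatically satisfies condition (1) of the definition. Hence $L$ is a Finslerian metric exactly when its fundamental tensor $g^L_v$ is positive definite for every $v\in TM\setminus 0$. Positivity of $L$ itself then follows from $L(v)=g^L_v(v,v)>0$, using the identity $g_v(v,v)=L(v)$ noted after Lemma~\ref{lema01}.

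The first step is to compute $g^L$. The function $v\mapsto \tfrac12 (df(v))^2$ is a quadratic form on each fiber, so its Hessian is the constant bilinear form $df\otimes df$. Consequently
\[
g^L_v(u,w)=g^{\tilde L}_v(u,w)-df_{\pi(v)}(u)\,df_{\pi(v)}(w),
\]
for all $v\in TM\setminus 0$ and $u,w\in T_{\pi(v)}M$. By the definition of the anisotropic $\tilde L$-gradient, $df(w)=g^{\tilde L}_v(\widetilde\nabla^v f,w)$, and therefore
\[
g^L_v(w,w)=g^{\tilde L}_v(w,w)-g^{\tilde L}_v(\widetilde\nabla^v f,w)^2 .
\]

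The second step applies the Cauchy--Schwarz inequality for the inner product $g^{\tilde L}_v$; I expect this to be the only point needing care. Write $a=g^{\tilde L}_v(\widetilde\nabla^v f,\widetilde\nabla^v f)$. Cauchy--Schwarz gives $g^{\tilde L}_v(\widetilde\nabla^v f,w)^2\le a\, g^{\tilde L}_v(w,w)$, so $g^L_v(w,w)\ge (1-a)\,g^{\tilde L}_v(w,w)$.

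For sufficiency, assume $a<1$. Then $g^L_v(w,w)>0$ for every $w\neq 0$, so $g^L_v$ is positive definite.

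For necessity, suppose $g^L_v$ is positive definite for every $v$. If $\widetilde\nabla^v f=0$, then $a=0<1$ and there is nothing to prove. Otherwise take $w=\widetilde\nabla^v f$, which gives
\[
0<g^L_v(w,w)=a-a^2=a(1-a),
\]
and since $a>0$ this forces $a<1$. Both choices of $v$ and of the test vector $w$ are made at the same base vector $v$. This is consistent with the definition of the anisotropic gradient, which is computed with $g^{\tilde L}_v$ at that same $v$. Since $v\in TM\setminus 0$ was arbitrary, the equivalence follows.
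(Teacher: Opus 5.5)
Your proof is correct and follows the paper's argument essentially verbatim. For sufficiency, Cauchy--Schwarz for $g^{\tilde L}_v$ gives $g^L_v(w,w)\ge (1-a)\,g^{\tilde L}_v(w,w)$; for necessity, testing $g^L_v$ on $w=\widetilde\nabla^v f$ gives $a(1-a)>0$. Two small additions go beyond the paper. You treat the case $\widetilde\nabla^v f=0$ separately; there the paper's strict inequality $0<g^L_v(\widetilde\nabla^v f,\widetilde\nabla^v f)$ fails, but $a=0<1$ holds trivially. You also compute the Hessian $g^L_v=g^{\tilde L}_v-df\otimes df$ explicitly, which the paper leaves implicit.
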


\begin{proof}  
Suppose first that $g^{\tilde L}_v(\widetilde{\nabla}^v f,\widetilde{\nabla}^v f) < 1$. By the Cauchy-Schwarz inequality with respect to $g^{\tilde{L}}_v$, we have
\begin{eqnarray*}
    g^L_v(w,w) &=& g^{\tilde{L}}_v(w,w) - (df)^2(w) \\
    &=& g^{\tilde{L}}_v(w,w) - g^{\tilde{L}}_{v}(\widetilde{\nabla}^v f, w)^2 \\
    &\geq& g^{\tilde{L}}_v(w,w) - g^{\tilde{L}}_{v}(w,w) g^{\tilde{L}}_{v}(\widetilde{\nabla}^v f, \widetilde{\nabla}^v f) \\
    &=& g^{\tilde{L}}_v(w,w) [1 - g^{\tilde{L}}_{v}(\widetilde{\nabla}^v f, \widetilde{\nabla}^v f)] > 0.
\end{eqnarray*}

Conversely, if $L$ is a Finslerian metric, then
\begin{eqnarray*}
    0 &< & g^L_v(\widetilde{\nabla}^v f,\widetilde{\nabla}^v f)=g^{\tilde L}_v(\widetilde{\nabla}^v f,\widetilde{\nabla}^v f) - (g^{\tilde L}_v(\widetilde{\nabla}^v f,\widetilde{\nabla}^v f))^2 \\
    &=& g^{\tilde L}_v(\widetilde{\nabla}^v f,\widetilde{\nabla}^v f)[1 - g^{\tilde L}_v(\widetilde{\nabla}^v f,\widetilde{\nabla}^v f)],
\end{eqnarray*}
which implies that $g^{\tilde L}_v(\widetilde{\nabla}^v f,\widetilde{\nabla}^v f) < 1$.

\end{proof}

\begin{corollary}\label{CorC}
Let $\tilde{L}$ be a Finslerian metric. If there exists a proper function $f$ such that $g^{\tilde{L}}_v(\widetilde{\nabla}^v f,\widetilde{\nabla}^v f) < 1$ for each $v\in TM\setminus 0$, then $\tilde{L}$ is complete.
\end{corollary}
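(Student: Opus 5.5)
This corollary follows by chaining Lemma~\ref{lem1} with Corollary~\ref{Cor1}. Let $f$ be a proper function with $g^{\tilde L}_v(\widetilde{\nabla}^v f,\widetilde{\nabla}^v f)<1$ for every $v\in TM\setminus 0$. The first step applies the ``if'' direction of Lemma~\ref{lem1}: the strict bound gives, via the Cauchy--Schwarz estimate in that lemma,
\[
g^{L}_v(w,w)\geq g^{\tilde L}_v(w,w)\bigl[1-g^{\tilde L}_v(\widetilde{\nabla}^v f,\widetilde{\nabla}^v f)\bigr]>0
\]
for $w\neq 0$. Hence $L=\tilde L-(df)^2$ has positive definite fundamental tensor everywhere on $TM\setminus 0$.

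Next I check that $L$ is a Finslerian metric in the paper's sense. Since $\tilde L$ and $(df)^2$ are both positively $2$-homogeneous, so is $L$. Smoothness on $TM\setminus 0$ is inherited from $\tilde L$ and from the quadratic form $(df)^2$. Regularity of $\mathcal L=\tfrac12 L$ follows from the positive definiteness just shown. This is exactly what Lemma~\ref{lem1} asserts, so this step is really just citing it.

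The second step invokes Corollary~\ref{Cor1} with this proper $f$: since $\tilde L-(df)^2$ is a Finslerian metric, $\tilde L=L+(df)^2$ is complete. In turn, Corollary~\ref{Cor1} rests on Proposition~\ref{PropF}. There completeness is obtained by realizing $\tilde L$ as the metric induced on the graph of $f$ in $(M\times\mathbb R, L+r^2)$. Properness of $f$ then forces forward and backward Cauchy sequences to be precompact.

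I expect no real obstacle. The only point needing care is that the inequality is strict and holds for all $v$, not merely at the single direction $\widetilde{\nabla}^v f$. This is what makes $g^L_v$ positive definite uniformly over all directions $v$, as the definition of a Finslerian metric requires, and it is precisely the hypothesis as stated.
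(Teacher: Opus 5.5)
Your proposal is correct and matches the paper's proof, which likewise obtains the corollary by combining the ``if'' direction of Lemma~\ref{lem1} (so that $\tilde L-(df)^2$ is a Finslerian metric) with Corollary~\ref{Cor1}, itself resting on Proposition~\ref{PropF}. Your extra checks of homogeneity, smoothness, and positive definiteness of $g^L_v$ simply unpack Lemma~\ref{lem1}, and they are fine.
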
 

\begin{proof}
This follows directly from the two preceding results.
\end{proof}

\begin{proposition}\label{completudemetricaJacobi}
Let $L$ be a Finslerian metric and $\tilde{L} = (\Phi \circ \pi) L$ be a Jacobi metric of $L$. If there exists a proper function $f$ such that $g^L_v(\nabla^v f,\nabla^v f) \leq \Phi(\pi(v))$ for each $v\in TM\setminus 0$, then $\tilde{L}$ is complete.
\end{proposition}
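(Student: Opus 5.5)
The plan is to reduce the statement to Corollary~\ref{CorC}, applied to the Finslerian metric $\tilde L$ and a suitable rescaling of the given proper function $f$. First, since $\Phi>0$ on the manifold where the Jacobi metric is defined (this is $M_e$, which we take as the ambient manifold), $\tilde L=(\Phi\circ\pi)L$ is again a Finslerian metric. Its fundamental tensor is $g^{\tilde L}_v=\Phi(\pi(v))\,g^L_v$, as in the proof of Corollary~\ref{lema2}.

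Next, I will express the $\tilde L$-gradient in terms of the $L$-gradient. By equation~\eqref{conformGrad}, $\nabla^v f=(\Phi\circ\pi)\,\widetilde\nabla^v f$, so $\widetilde\nabla^v f=\Phi(\pi(v))^{-1}\nabla^v f$. Hence
\[
g^{\tilde L}_v(\widetilde\nabla^v f,\widetilde\nabla^v f)=\Phi(\pi(v))\,\frac{g^L_v(\nabla^v f,\nabla^v f)}{\Phi(\pi(v))^2}=\frac{g^L_v(\nabla^v f,\nabla^v f)}{\Phi(\pi(v))}\le 1
\]
for every $v\in TM\setminus 0$, using the hypothesis.

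The only obstacle is that Corollary~\ref{CorC} (through Lemma~\ref{lem1}) requires the strict inequality $<1$, while the hypothesis only gives $\le 1$. I will fix this by replacing $f$ with $h=\tfrac12 f$. The function $h$ is still proper, since $h^{-1}(K)=f^{-1}(2K)$ is compact for every compact $K$. The gradient is linear in $df$, so $\widetilde\nabla^v h=\tfrac12\widetilde\nabla^v f$, and therefore
\[
g^{\tilde L}_v(\widetilde\nabla^v h,\widetilde\nabla^v h)\le\tfrac14<1 .
\]
Corollary~\ref{CorC} applied to $\tilde L$ and $h$ then shows that $\tilde L$ is complete.
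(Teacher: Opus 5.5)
Your proof is correct and follows the paper's route: you use \eqref{conformGrad} to get $g^{\tilde L}_v(\widetilde\nabla^v f,\widetilde\nabla^v f)=g^L_v(\nabla^v f,\nabla^v f)/\Phi(\pi(v))\le 1$ and then invoke Corollary~\ref{CorC}. Replacing $f$ by the still-proper function $\tfrac12 f$ is a worthwhile addition. The paper's proof obtains only the non-strict bound $\le 1$ and applies Corollary~\ref{CorC} without comment, although that corollary, through Lemma~\ref{lem1}, requires the strict inequality $<1$.
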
 
\begin{proof}
Since $\tilde{L}$ is positively $2$-homogeneous, equation \eqref{conformGrad} yields
\begin{eqnarray*}
    g^L_v(\nabla^v f,\nabla^v f) &=& \frac{1}{\Phi \circ \pi} g^{\tilde{L}}_v((\Phi \circ \pi)\widetilde{\nabla}^v f,(\Phi \circ \pi)\widetilde{\nabla}^v f) \\ 
    &=& (\Phi \circ \pi) g^{\tilde{L}}_v(\widetilde{\nabla}^v f,\widetilde{\nabla}^v f),
\end{eqnarray*}
which, by hypothesis, implies that $g^{\tilde{L}}_v(\widetilde{\nabla}^v f,\widetilde{\nabla}^v f) \leq 1$ for each $v\in TM\setminus 0$.  Therefore, $\tilde{L}$ is complete by Corollary~\ref{CorC}.
\end{proof}

A function $h \colon [0,\infty) \to \mathbb{R}$ is called \textbf{positively complete} if it is of class $C^1$, non-increasing, and satisfies
\[
\int_0^{\infty} \frac{ds}{\sqrt{e - h(s)}} = \infty,
\]
 for all $e$ such that $e > \sup h$.

\begin{theorem}\label{completudemovimento} 
Let $L \colon \mathcal{U} \to \mathbb{R}$ be a conic Finslerian metric, where $\mathcal{U}$ is a conic convex open set. Let $(M,\tfrac12 L,-\nabla^v \phi)$ be a conservative conic Finslerian mechanical system and let $\gamma$ be an $\mathcal{M}$-motion. Suppose that there exists an admissible proper function $H$ such that $h(t) := H(\gamma(t))$ is a positively complete function, $L(\nabla H)|_{\gamma} \leq C^2$ for some positive constant $C$, and $\phi \circ \gamma \geq h$. Then $\gamma$ is defined for all $t > 0$.
\end{theorem}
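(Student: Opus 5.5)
We argue by contradiction in the standard way for escape-in-finite-time: assume the maximal forward interval of $\gamma$ is $[0,T)$ with $T<\infty$. The strategy is to show that $\gamma([0,T))$ stays in a compact subset of $M$ and that $\gamma'$ stays in a compact subset of $\mathcal U$. The extension principle coming from the ODE form of Newton's equation (Theorem~\ref{existencUnicit} applied to $\tilde{\mathcal L}$, via Lemma~\ref{LemaLagrangeano}) then lets $\gamma$ be continued beyond $T$, which is a contradiction.

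\textbf{Step 1 (energy bound on speed).} By conservation of energy, $\tfrac12 L(\gamma')+\phi(\gamma)=e$ is constant. Since $L>0$ on $\mathcal U$, we get $\phi\circ\gamma<e$. Combined with $\phi\circ\gamma\ge h$, this gives
\[
L(\gamma'(t)) = 2\bigl(e-\phi(\gamma(t))\bigr)\le 2\bigl(e-h(t)\bigr).
\]
In particular $e\ge\sup h$. The degenerate case $e=\sup h$ should be handled separately or excluded; the generic case is $e>\sup h$.

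\textbf{Step 2 (control of $h'$).} By definition of the anisotropic gradient and Cauchy--Schwarz for the positive definite $g_{\gamma'}$,
\[
|h'(t)| = |dH(\gamma')| = |g_{\gamma'}(\nabla^{\gamma'}H,\gamma')| \le \sqrt{g_{\gamma'}(\nabla^{\gamma'} H,\nabla^{\gamma'} H)}\,\sqrt{L(\gamma')}.
\]
We read the hypothesis $L(\nabla H)|_\gamma\le C^2$ as $g_{\gamma'}(\nabla^{\gamma'}H,\nabla^{\gamma'}H)\le C^2$. Making this identification is where convexity of $\mathcal U$ and admissibility of $H$ (so that $\nabla^{\gamma'}H\in\mathcal U$) enter, together with the fundamental inequality $g_v(u,u)\le\dots$ or simply the definition. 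Step 1 then yields $|h'|\le C\sqrt{2(e-h)}$. Since $h$ is non-increasing, $-h'\le C\sqrt2\sqrt{e-h}$, and hence
\[
\int_{h(t)}^{h(0)}\frac{ds}{\sqrt{e-s}}\le \sqrt2\,C\,t .
\]
This is the key estimate. The positive completeness of $h$ is the hypothesis designed exactly to turn it into boundedness: for $t<T<\infty$ the left-hand side stays bounded. We therefore expect the divergence $\int^\infty ds/\sqrt{e-h(s)}=\infty$ to force $h(t)$ to stay bounded below on $[0,T)$. I expect the main obstacle to be here: matching the paper's definition, where the integral is in the time variable, to this estimate. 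I would first try directly comparing $t$ with the reparametrization $\sigma(t)=\int_0^t\sqrt{e-h}$ to show $h$ cannot reach $-\infty$ in finite time.

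\textbf{Step 3 (compactness and extension).} With $h=H\circ\gamma$ bounded on $[0,T)$, and $h$ non-increasing so that it is bounded above by $h(0)$, properness of $H$ puts $\gamma([0,T))$ in the compact set $K=H^{-1}([m,h(0)])$. On $K$, $\phi$ is bounded, so $L(\gamma')=2(e-\phi\circ\gamma)$ is bounded above. Finslerian positivity on the compact unit-sphere bundle then bounds $\gamma'$ in any auxiliary Riemannian norm. We also need $\gamma'$ to stay away from $\partial\mathcal U$ and the zero section, so that the limit exists in $\mathcal U$; convexity of $\mathcal U$ is presumably invoked to control this. Standard ODE extension then gives the contradiction, so $\gamma$ is defined for all $t>0$.
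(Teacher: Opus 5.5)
Your outline follows the same route as the paper: an energy bound, a gradient estimate, then compactness from properness of $H$. The decisive gap is the one you flagged yourself in Step 3. Knowing that $\gamma([0,T))$ lies in a compact set and that $L(\gamma')=2(e-\phi\circ\gamma)$ is bounded does not keep $\gamma'$ in a compact subset of $\mathcal U$. For a conic metric, $\gamma'$ can reach $\partial\mathcal U$ in finite time, or reach the zero section at a turning point, and convexity of $\mathcal U$ does nothing to prevent this. The paper's proof does not close this either: it concludes directly from ``$\gamma([0,b))$ is contained in the compact set $H^{-1}([c,f(b)])$''.

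The step in fact cannot be closed from the stated hypotheses. On $M=S^1\times\mathbb R$ with coordinates $(\theta,y)$, take $L(v)=v_\theta^2+v_y^2$ on the convex cone $\mathcal U=\{v_y>0\}$ and $\phi=y$. The motion $\gamma(t)=(t,t-t^2/2)$ has $\gamma'(t)=(1,1-t)$, so it stops being $\mathcal U$-admissible at $t=1$. Now let $H=y+\psi(\theta)$, with $\psi$ smooth, periodic and $\psi(\theta)=-2\theta$ near $[0,1]$. Then:
\begin{itemize}
\item[(a)] $H$ is proper, and $\nabla H=(\psi',1)\in\mathcal U$ with $L(\nabla H)$ bounded;
\item[(b)] $h(t)=-t-t^2/2$ is non-increasing on $[0,1)$, and $\phi\circ\gamma\ge h$;
\item[(c)] even your stronger reading of the gradient bound holds, since $g_v$ does not depend on $v$ here.
\end{itemize}
The statement survives only because ``$h$ positively complete'' literally requires $h=H\circ\gamma$ to be defined on $[0,\infty)$, which presupposes the conclusion. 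A genuine proof needs an extra hypothesis keeping $\gamma'$ in a compact subset of $\mathcal U$ over compact subsets of $M$.

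Step 2 has a second gap, though not where you placed it. Reading $L(\nabla H)|_\gamma\le C^2$ as $g_{\gamma'}(\nabla^{\gamma'}H,\nabla^{\gamma'}H)\le C^2$ is a change of hypothesis, not an identification: no inequality between $g_v(u,u)$ and $L(u)=g_u(u,u)$ holds in general. The paper instead takes $\nabla H$ to be the Legendre gradient, $g_{\nabla H}(\nabla H,\cdot)=dH$, and uses the fundamental inequality on the convex cone (this is what convexity of $\mathcal U$ is needed for). That yields only $h'\le C\sqrt{L(\gamma')}$, an upper bound on $h$, which is vacuous for non-increasing $h$. A lower bound needs control of $-dH(\gamma')$, and since $-\gamma'\notin\mathcal U$ the fundamental inequality provides none. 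The paper's own lower bound $c\le h$ is simply asserted from positive completeness, which does not imply it: $h(s)=-s$ is positively complete.

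On the other hand, the obstacle you anticipated is not real. Given a two-sided bound, $\int_{h(t)}^{h(0)}\frac{ds}{\sqrt{e-s}}=2\sqrt{e-h(t)}-2\sqrt{e-h(0)}$. Hence $\sqrt{e-h(t)}\le\sqrt{e-h(0)}+Ct/\sqrt{2}$, which bounds $h$ below on $[0,T)$ without any use of positive completeness. Also, $e>\sup h$ is automatic: $\sup h=h(0)\le\phi(\gamma(0))<e$, because $L>0$ on $\mathcal U$.
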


\begin{proof}
Let $\gamma \colon [0,b) \to M$ be an $\mathcal{M}$-motion and let $e = \mathcal{E}(\gamma')$ be its energy. Then 
\begin{equation}
    L(\gamma') \leq 2(e - h(t)).
\end{equation}
By the Cauchy-Schwarz inequality,
$$h' = dH(\gamma') = g^L_{\nabla H} (\nabla H, \gamma') \leq \sqrt{L(\nabla H) L(\gamma')}.$$
By hypothesis, there exists a constant $C$ such that $L(\nabla H)|_{\gamma} \leq C^2$, which yields
$$h'(t) \leq C\sqrt{L(\gamma')} \leq C\sqrt{2(e - h(t))}.$$
Consider the ODE
$$f'(t) = C\sqrt{2(e - f(t))} \quad \text{with} \quad f(0) = h(0).$$
The time required for $f$ to reach a value $s$ is given by
\[
t = \int_{f(0)}^{s} \frac{du}{C\sqrt{2(e - u)}}.
\]
Since $h$ is positively complete, the function $f$ is defined for all $t > 0$. By construction, $h' \leq f'$ and $h(0) = f(0)$, implying that $h(t) \leq f(t)$.

Furthermore, since $h$ is positively complete, there exists a constant $c$ such that $c \leq h$. Thus, for any $b \in \operatorname{Dom}(\gamma)$, we have
$$H(\gamma([0,b))) = h([0,b)) \subset \{s \in \mathbb{R} : c \leq s \leq f(b)\}.$$
Consequently, $\gamma([0,b))$ is contained in the compact set $H^{-1}([c, f(b)])$, which implies that $\gamma$ is defined for all $t > 0$.
\end{proof}

Let $L \colon \mathcal{U} \to \mathbb{R}$ be a conic pseudo-Finslerian metric on $M$. An anisotropic vector field $Y \in \mathfrak{X}(\mathcal{U})$ is said to be \textbf{dissipative} if 
$$d\mathcal{L}_v(Y) = g^L_v(v, Y(v)) < 0$$
for every $v \in \mathcal{U}$, where $\mathcal{L} = \frac{1}{2}L$ is the kinetic energy associated with $L$. The conic pseudo-Finslerian mechanical system $\mathcal{M} = (M, \mathcal{L}, \mathcal{F})$ is \textbf{dissipative} if 
$$\mathcal{F}(v) = -\nabla^v \phi \circ \pi(v) + Y(v),$$
where $\phi$ is a potential function and $Y$ is a dissipative anisotropic vector field of $(M,L)$. The \textbf{associated conservative mechanical system} is given by $\mathcal{M} = (M, \mathcal{L}, -\nabla^v \phi)$.

\begin{proposition}\label{energiasistemadissipativo}
Let $\gamma$ be a motion of a dissipative mechanical system $\mathcal{M} = (M, \mathcal{L}, \mathcal{F})$ with $\mathcal{F}(v) = -\nabla^v \phi \circ \pi(v) + Y(v)$, and let $\mathcal{E} = \mathcal{L} + \phi \circ \pi$ be the total energy of the associated conservative system. Then $\mathcal{E}(\gamma')$ is decreasing.
\end{proposition}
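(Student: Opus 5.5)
The plan is to differentiate $t\mapsto\mathcal E(\gamma'(t))=\tfrac12 L(\gamma'(t))+\phi(\gamma(t))$ directly and show the derivative equals $g_{\gamma'}(\gamma',Y(\gamma'))$. This is negative by the dissipativity assumption, so $\mathcal E(\gamma')$ is strictly decreasing.

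First, compute the kinetic term. By Lemma~\ref{proptfund1}(ii), $L(\gamma')=g_{\gamma'}(\gamma',\gamma')$. Apply the derivative formula along $\gamma$ for the Chern covariant derivative with $X=Y=\gamma'$:
\[
\frac{d}{dt}g_{\gamma'}(\gamma',\gamma')=2g_{\gamma'}(D^{\gamma'}_{\gamma}\gamma',\gamma')+2C_{\gamma'}(\gamma',\gamma',D^{\gamma'}_{\gamma}\gamma').
\]
The Cartan term vanishes because $C_v(v,\cdot,\cdot)=0$ and $C$ is symmetric. Hence $\frac{d}{dt}\mathcal L(\gamma')=g_{\gamma'}(D^{\gamma'}_{\gamma}\gamma',\gamma')$. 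Alternatively, one can write this as $d\mathcal L$ along the lift and use Lemma~\ref{proptfund1}(iii), but the covariant-derivative route is cleaner.

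Next, insert Newton's equation \eqref{NewtonEq} with $\mathcal F(v)=-\nabla^v\phi+Y(v)$:
\[
g_{\gamma'}(D^{\gamma'}_{\gamma}\gamma',\gamma')=-g_{\gamma'}(\nabla^{\gamma'}\phi,\gamma')+g_{\gamma'}(Y(\gamma'),\gamma')=-d\phi(\gamma')+g_{\gamma'}(\gamma',Y(\gamma')),
\]
where the first term uses the defining property of the anisotropic gradient. Since $\frac{d}{dt}\phi(\gamma(t))=d\phi(\gamma')$, the potential contributions cancel, and we obtain $\frac{d}{dt}\mathcal E(\gamma')=g_{\gamma'}(\gamma',Y(\gamma'))=d\mathcal L_{\gamma'}(Y)<0$.

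The only delicate point is justifying the vanishing of the Cartan term and the use of the compatibility identity along a $\mathcal U$-admissible curve. Both are stated earlier in the paper, so no real obstacle is expected. One should also note that the dissipativity hypothesis is pointwise on $\mathcal U$ and $\gamma'\in\mathcal U$, so the inequality applies at every $t$.
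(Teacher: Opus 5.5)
Your proof is correct, but it takes a different route from the paper's. You compute $\frac{d}{dt}\mathcal{E}(\gamma')$ directly: you use the metric compatibility of the Chern covariant derivative along $\gamma$, kill the Cartan term with $C_v(v,\cdot,\cdot)=0$, then substitute Newton's equation and the defining identity $g_v(\nabla^v\phi,w)=d\phi(w)$ so that the potential terms cancel.

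The paper instead writes the derivative as $d\mathcal{E}_{\gamma'(t_0)}$ applied to the acceleration and splits the force into $-\nabla\phi$ and $Y$. It disposes of the first piece by energy conservation: by regularity of $\mathcal{L}-\phi\circ\pi$ there is a conservative motion $\alpha$ with $\alpha'(0)=\gamma'(t_0)$, and $\frac{d}{dt}\big|_{t=0}\mathcal{E}(\alpha')=0$. The second piece reduces to $d\mathcal{L}(Y)$ because $Y$ is vertical, so $d(\phi\circ\pi)(Y)=0$.

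Your route is more elementary and self-contained. It gives the explicit power identity $\frac{d}{dt}\mathcal{E}(\gamma')=g_{\gamma'}(\gamma',Y(\gamma'))$, which also reproves conservation when $Y=0$. It needs no auxiliary motions or ODE existence. It also avoids the paper's implicit identification of $D^{\gamma'}_\gamma\gamma'$ with a tangent vector to $TM$; made precise, that step is the fact that $\gamma''(t_0)$ and $\alpha''(0)$ in $T(TM)$ differ by the vertical lift of $Y(\gamma'(t_0))$.

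The paper's route is more structural. It only uses that $\mathcal{E}$ is a first integral of the conservative second-order system and that the perturbation is vertical. So it applies to any regular Lagrangian perturbed by a vertical force, without invoking the Chern connection or the Cartan tensor.

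One cosmetic point: you use $Y$ as a dummy argument in the compatibility formula (when you set $X=Y=\gamma'$) while $Y$ also denotes the dissipative field. Rename one of them.
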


\begin{proof}
By definition, $D_{\gamma}^{\gamma'}\gamma' = \mathcal{F}(\gamma')$. Thus,
\begin{eqnarray*}
    \frac{d}{dt}\Big|_{t=t_0} \mathcal{E}(\gamma') &=& d\mathcal{E}_{\gamma'(t_0)}(D^{\gamma'}_{\gamma}\gamma') \\
    &=& -d\mathcal{E}_{\gamma'(t_0)}(\nabla^{\gamma'(t_0)} \phi) + d\mathcal{E}_{\gamma'(t_0)}(Y(\gamma')).
\end{eqnarray*}
By the conservation of energy and the regularity of $\mathcal{L}$, we have $d\mathcal{E}_{\gamma'(t_0)}(\nabla^{\gamma'(t_0)} \phi) = 0$. Indeed, since the Lagrangian $ \mathcal{L} - \phi\circ \pi$ associated with the conservative mechanical system $(M, \mathcal{L}, -\nabla \phi)$ is regular, there exists a motion $\alpha$ of this system satisfying $\alpha'(0) = \gamma'(t_0)$. Hence,
$$d\mathcal{E}_{\gamma'(t_0)}(\nabla^{\gamma'(t_0)} \phi) = \frac{d}{dt}\Big|_{t=0} \mathcal{E}(\alpha') = 0.$$
On the other hand, since $Y(\gamma'(t_0))$ is tangent to $T_{\gamma(t_0)}M$, it follows that
\begin{eqnarray*}
    d\mathcal{E}_{\gamma'(t_0)}(Y(\gamma')) &=& d\mathcal{L}(Y(\gamma')) + d(\phi \circ \pi)(Y(\gamma')) \\
    &=& d\mathcal{L}(Y(\gamma')) + d\phi (d\pi(Y(\gamma'))) \\
    &=& d\mathcal{L}(Y(\gamma')).
\end{eqnarray*}
Therefore, $\frac{d}{dt}\mathcal{E}(\gamma') < 0$, since $Y$ is dissipative.
\end{proof}

\begin{proposition}
Let $\mathcal{M} = (M, \frac{1}{2}L, \mathcal{F})$ be a convex conic dissipative Finslerian mechanical system and let $\gamma$ be an $\mathcal{M}$-motion. Under the hypotheses of Theorem~\ref{completudemovimento}, $\gamma$ is defined for all $t > 0$.
\end{proposition}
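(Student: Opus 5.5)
The plan is to rerun the proof of Theorem~\ref{completudemovimento}, changing only the step where energy conservation was used. Let $\gamma\colon[0,b)\to M$ be a maximal $\mathcal M$-motion of the dissipative system, and put $e:=\mathcal E(\gamma'(0))$, where $\mathcal E=\mathcal L+\phi\circ\pi$ is the energy of the associated conservative system. By Proposition~\ref{energiasistemadissipativo}, $\mathcal E(\gamma'(t))$ is decreasing, so
\[
\tfrac12 L(\gamma'(t))+\phi(\gamma(t))\le e \qquad \text{for all } t\in[0,b).
\]
The hypothesis $\phi\circ\gamma\ge h$ then gives $L(\gamma')\le 2(e-h(t))$. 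This is exactly the inequality that opened the previous proof, now holding as an inequality rather than an equality.

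From here the argument is the same. We have
\[
h'=dH(\gamma')=g_{\nabla H}(\nabla H,\gamma')\le\sqrt{L(\nabla H)L(\gamma')},
\]
where the Cauchy--Schwarz step uses that $\mathcal U$ is convex and $L$ is conic Finslerian, as in the earlier proof. Combined with $L(\nabla H)|_\gamma\le C^2$, this yields
\[
h'(t)\le C\sqrt{2(e-h(t))}.
\]
Next compare $h$ with the solution $f$ of $f'=C\sqrt{2(e-f)}$, $f(0)=h(0)$. Positive completeness of $h$ makes the time integral $\int du/\sqrt{e-u}$ diverge, so $f$ is finite for all $t>0$. A standard comparison argument gives $h\le f$. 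Since $h$ is non-increasing, $h$ is also bounded below by a constant $c$.

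Therefore $\gamma([0,b))\subset H^{-1}([c,f(b)])$, which is compact because $H$ is proper. A motion that stays in a compact set on a finite interval extends beyond it by the ODE existence theory (Theorem~\ref{existencUnicit} applied to Newton's equation in coordinates). Maximality then forces $b=\infty$.

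The main obstacle is making sure the comparison and the positive-completeness hypothesis still apply when $e$ is only an upper bound for the energy. Two points need care. First, $e>\sup h$ must hold in order to invoke the divergence of the integral. If instead $e\le\sup h$, the inequality $h\le e$ (which follows from $L\ge0$) together with monotonicity of $h$ already bounds $h$, and the compactness argument goes through directly. Second, the comparison $h\le f$ has to be justified despite the non-Lipschitz square root at $u=e$. For this I would compare $h$ with solutions starting slightly above $h(0)$, or argue via the inverse functions $t\mapsto\int du/\big(C\sqrt{2(e-u)}\big)$. Everything else is identical to the conservative case.
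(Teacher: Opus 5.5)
Your reduction is exactly the paper's proof. Proposition~\ref{energiasistemadissipativo} replaces energy conservation by the inequality $L(\gamma')\le 2(e-h(t))$ with $e=\mathcal E(\gamma'(0))$, and the argument of Theorem~\ref{completudemovimento} is then rerun.

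\textbf{The gap is in that rerun,} at the one step that carries the conclusion. You assert that $h$ is bounded below because it is non-increasing. But non-increasing only gives $h\le h(0)$, an \emph{upper} bound, and every other estimate in your chain ($h\le f\le e$) is also an upper bound. Positive completeness does not give a lower bound either: $h(s)=-s$ is $C^1$, non-increasing, and $\int_0^\infty ds/\sqrt{e+s}=\infty$ for every $e>0=\sup h$, yet it is unbounded below. Hence $\gamma([0,b))\subset H^{-1}([c,f(b)])$ is not established.

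The paper's proof of Theorem~\ref{completudemovimento} asserts the same bound ``since $h$ is positively complete'', so you inherit this hole rather than create it. The only lower bound on $[0,b)$ that the literal hypotheses provide is $h\ge h(b)$. That uses the fact that $h=H\circ\gamma$ is already a function on $[0,\infty)$, so it presupposes the conclusion.

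Even granting that $\gamma([0,b))$ is relatively compact, your extension step is incomplete. Newton's equation is an ODE on the phase space $\mathcal U\subset TM\setminus 0$, so you must show that $\gamma'([0,b))$ stays in a compact subset of $\mathcal U$. The energy inequality bounds $L(\gamma')$ only from above. Nothing keeps $\gamma'$ away from $\partial\mathcal U$, or from the zero section, where $\nabla^v\phi$ is undefined for non-Riemannian $L$ (think of turning points).

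By contrast, the two points you single out as the main obstacles are harmless.
\begin{enumerate}
\item Since $\sup h=h(0)\le\phi(\gamma(0))$ and $L>0$ on $\mathcal U$, one always has $e>\sup h$. Your separate case is therefore vacuous, and its treatment again uses only upper bounds.
\item The comparison $h\le f$ is immediate from $h'\le 0\le f'$ and $f(0)=h(0)$.
\end{enumerate}
Your claim that positive completeness makes $\int du/\sqrt{e-u}$ diverge is false: $\int_{h(0)}^{e}du/\sqrt{e-u}=2\sqrt{e-h(0)}$, so $f$ reaches $e$ in finite time. This is harmless, since $f\le e$.

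The divergence in the definition of positive completeness is useful only when $h$ is evaluated at the \emph{value} of $H$ rather than at time $t$, as in the classical completeness criteria for mechanical systems. Suppose $\phi\ge h\circ H$ with $H\ge 0$ proper and $L(\nabla H)\le C^2$. Then for $e>\sup h$ your Cauchy--Schwarz estimate gives
\[
\frac{d}{dt}\int_{H(\gamma(0))}^{H(\gamma(t))}\frac{ds}{\sqrt{2(e-h(s))}}\le C,
\]
which bounds $H(\gamma(t))$ on bounded time intervals. Under a hypothesis of that form your dissipative reduction would go through verbatim, apart from the velocity issue above.
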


\begin{proof}
By Proposition~\ref{energiasistemadissipativo}, we obtain $L(\gamma'(t)) \leq 2(\mathcal{E}(\gamma'(0)) - h(t))$. Consequently, the result follows by an argument analogous to the proof of Theorem~\ref{completudemovimento}.
\end{proof}

\subsection{Conic pseudo-Finslerian mechanical system with constraints}
 A \textbf{holonomic constraint} on a conic pseudo-Finslerian mechanical system  $\mathcal M=(M,\mathcal L=\tfrac{1}{2}L ,\mathcal  F)$ is a
 submanifold $N \subset  M$ with $\dim N < \dim M$. A curve $\gamma : I \subset \mathbb R \to  M$ is said to be
 \textbf{compatible} with $N$ if $\gamma(t) \in N $ for all $t \in I$.

 A \textbf{reaction force} on a conic pseudo-Finslerian mechanical system with holonomic constraint
 $\mathcal M_N=(M,\mathcal L ,\mathcal  F, N)$ is an anisotropic vector field $\mathcal  R : \mathcal U\cap TN \to  TM$ along $N$ such that  $ \pi\circ \mathcal  R(v)=\pi(v)$   for each $v\in\mathcal U\cap  TN$ and there exists a solution $\gamma : I \subset \mathbb R \to N$ of the
 \textbf{generalized Newton equation}
 \begin{equation}
D_{\gamma}^{\gamma'}\gamma'=\mathcal  F(\gamma')+\mathcal  R(\gamma')
 \end{equation}
 with $\gamma'(0)=v$, where $D$ is the anisotropic covariant derivative of $(M,L)$. A reaction force $\mathcal  R$ is said to be \textbf{perfect}, or to satisfy the \textbf{d’Alembert principle}, if $\mathcal R(v)\in TN^{\perp}_v$ for each $v\in \mathcal U\cap TN$:
 $g_v(\mathcal R(v),w)=0$
 for all $w\in T_{\pi(v)} N$.

 \begin{theorem}\label{holonomiccontrains}
    Let
 $(M, \mathcal L =\tfrac{1}{2}L,\mathcal  F, N)$ be a conic pseudo-Finslerian mechanical system with holonomic constraint such that $N$ is a pseudo-Finslerian submanifold. Then there exists a unique  reaction force $\mathcal  R:\mathcal U\cap TN \to TM$ satisfying
 the d’Alembert principle and the solutions of the generalized Newton equation are exactly the motions of the pseudo-Finslerian mechanical system $(N,\hat L:= L|_{\mathcal U\cap TN},\mathcal  F_N)$, where $\mathcal F_N:\mathcal U\cap TN\to TN$ is an anisotropic vector field given by
 $\mathcal  F_N(v)=P_{v}(\mathcal  F(v)),$
and $P_{v}$ is the projection on $TN$ of the decomposition $TM=TN\oplus TN^{\perp}_v$.\end{theorem}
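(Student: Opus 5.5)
The plan is to decompose the ambient covariant derivative along a curve in $N$ into tangential and normal parts, and then read off both the reaction force and the induced dynamics from that splitting.

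\emph{Step 1: the tangential part of the ambient acceleration.} Let $\gamma$ be a $\mathcal U$-admissible curve in $N$. Write $\hat D$ for the anisotropic covariant derivative of $(N,\hat L)$. We will show that
\[
P_{\gamma'}\bigl(D^{\gamma'}_{\gamma}\gamma'\bigr)=\hat D^{\gamma'}_{\gamma}\gamma'
\qquad\text{and}\qquad
P^{\perp}_{\gamma'}\bigl(D^{\gamma'}_{\gamma}\gamma'\bigr)=B(\gamma').
\]
The normal identity is the definition \eqref{fakesegundaforma} of $B$, taken with $\mathcal D=TN$. It holds once we extend $\gamma'$ locally to a vector field $V$ tangent to $N$ (near points where $\gamma'\neq0$) and use $D^{\gamma'}_\gamma\gamma'=\nabla^V_VV$ along $\gamma$.

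For the tangential identity, the fundamental tensor of $\hat L$ is the restriction $g_v|_{TN}$. We would verify that $\hat\nabla^V_XY:=P_V(\nabla^V_XY)$, for $V,X,Y$ tangent to $N$, is torsion-free and compatible with $\hat g$. Torsion-freeness holds because $[X,Y]$ is tangent to $N$. For compatibility, we compare with the compatibility identity of the Chern connection of $L$. The difference terms are $g_V(P^\perp\nabla^V_XY,Z)=0$ for $Z\in TN$, together with Cartan terms $C_V(Y,Z,\nabla^V_XV)$.

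This Cartan discrepancy is the main obstacle. The term $C_V(Y,Z,P^\perp\nabla^V_XV)$ need not vanish, so $\hat\nabla$ may fail to be the Chern connection of $\hat L$ in general. However, we only need the geodesic-type quantity $\nabla^V_VV$. For that quantity, the Koszul-type formula of Theorem~\ref{ChernConnection} with $X=Y=V$ involves only $C_V(V,\cdot,\cdot)=0$ and $C_V(\nabla^V_ZV,V,V)=0$. So we would apply the Koszul formula for $L$ and for $\hat L$ with $X=Y=V$ and $Z$ tangent to $N$, observe that both right-hand sides coincide, and conclude that $g_V(\nabla^V_VV-\hat\nabla^V_VV,Z)=0$ for all $Z\in TN$. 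This gives the tangential identity.

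\emph{Step 2: existence, uniqueness, and the induced dynamics.} Since $T_pN$ is nondegenerate, $T_pM=T_pN\oplus TN^\perp_v$. Apply $P^\perp_{\gamma'}$ to the generalized Newton equation. Because $\mathcal R(\gamma')\in TN^\perp_{\gamma'}$, this forces
\[
\mathcal R(v)=B(v)-P^\perp_v(\mathcal F(v)),
\]
which proves uniqueness. Conversely, define $\mathcal R$ by this formula. It is smooth, satisfies d'Alembert's principle, and makes the normal component of the equation hold identically for every curve in $N$.

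Now apply $P_{\gamma'}$ to the equation. By Step 1 it becomes $\hat D^{\gamma'}_\gamma\gamma'=P_{\gamma'}\mathcal F(\gamma')=\mathcal F_N(\gamma')$, which is Newton's equation \eqref{NewtonEq} for $(N,\hat L,\mathcal F_N)$. Hence the solutions of the generalized Newton equation lying in $N$ are exactly the $(N,\hat L,\mathcal F_N)$-motions.

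Finally, existence of a solution through each $v\in\mathcal U\cap TN$ follows from the standard ODE theory for Newton's equation on $N$, as in Theorem~\ref{existencUnicit} in local coordinates. This shows that $\mathcal R$ is indeed a reaction force.
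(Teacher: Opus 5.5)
Your proposal is correct and follows the paper's route: applying $P^{\perp}_{\gamma'}$ to the generalized Newton equation forces $\mathcal R(v)=B(v)-P^{\perp}_v(\mathcal F(v))$, and existence comes from solving the tangential equation $P_{\gamma'}(D^{\gamma'}_{\gamma}\gamma')=P_{\gamma'}(\mathcal F(\gamma'))$ on $N$. Your Step 1 is a useful addition: it uses the Koszul formula with $X=Y=V$, where all Cartan terms vanish, to get $P_{\gamma'}(D^{\gamma'}_{\gamma}\gamma')=\hat D^{\gamma'}_{\gamma}\gamma'$, whereas the paper uses this identification only implicitly when it treats that tangential equation as Newton's equation for $(N,\hat L,\mathcal F_N)$.
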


 \begin{proof}
Assume that a perfect reaction force $\mathcal  R$ exists. Then the solutions of the generalized
 Newton equation satisfy 
$D_{\gamma}^{\gamma'}\gamma'=\mathcal  F(\gamma')+\mathcal  R(\gamma').$ Therefore, $\mathcal R$ is uniquely determined by
\begin{equation}\label{EqRation}
    \mathcal R(v)=B(v)-P^{\perp}_v(\mathcal F(v))
\end{equation}
for all $v\in \mathcal U\cap  TN$, where $B$ is the operator defined by Equation~\ref{fakesegundaforma}. To establish existence, define $\mathcal  R$ via the equation above. By definition, $g_v(\mathcal R(v),w)=0$ for each $v\in \mathcal U\cap TN$ and $w\in T_{\pi(v)}N$.  Given    $v\in \mathcal U\cap TN$ let $\gamma:I\to N$ be the unique solution of ordinary differential equation 
$P_{\gamma'}(D_{\gamma}^{\gamma'}\gamma')= P_{\gamma'}(\mathcal F(\gamma'))$
 with initial condition $\gamma'(0)=v$. It follows that: 
\begin{eqnarray*}
D_{\gamma}^{\gamma'}\gamma'&=&P_{\gamma'}(D_{\gamma}^{\gamma'}\gamma')+P^{\perp}_{\gamma'}(D_{\gamma}^{\gamma'}\gamma')\\&=&P_{\gamma'}(D_{\gamma}^{\gamma'}\gamma')+P^{\perp}_{\gamma'}(\mathcal  F(\gamma')) +\mathcal R(\gamma')=\mathcal F(\gamma')+\mathcal  R(\gamma'). 
\end{eqnarray*}
  \end{proof}

A \textbf{non-holonomic constraint} on a conic pseudo-Finslerian mechanical system  $(M, L ,\mathcal  F)$ is a smooth regular distribution $\mathcal D$ on $M$. A curve $\gamma : I \subset \mathbb R \to  M$ is said to be
 \textbf{compatible} with $\mathcal D$ if $\gamma'(t) \in \mathcal D $ for all $t \in I$. We say that  $(M,\mathcal L=\tfrac{1}{2}L ,\mathcal  F, \mathcal D)$ is a \textbf{conic pseudo-Finslerian mechanical system with non-holonomic constraint}. In particular, if $\mathcal D$ is integrable, we have a foliation of $M$ by holonomic constraints. 

 A \textbf{reaction force} on
 $(M,\mathcal L=\tfrac{1}{2}L ,\mathcal  F, \mathcal D)$ is a map $\mathcal  R : \mathcal U\cap \mathcal D \to  TM$ such that  $\pi\circ  \mathcal R(v)=\pi(v)$   for each $v\in\mathcal U\cap \mathcal D$ and there is a curve $\gamma:I\to M$  with $\gamma'(0)=v$ compatible with $\mathcal D$ and solution of the
 \textbf{generalized Newton equation}
 \begin{equation}\label{EqNG}
D_{\gamma}^{\gamma'}\gamma'=\mathcal F(\gamma')+\mathcal R(\gamma'),
 \end{equation}
 where $D$ is the anisotropic covariant derivative of $(M,L)$.  A reaction force $\mathcal  R$ is said to be \textbf{perfect}, or to satisfy the \textbf{d’Alembert principle}, if $\mathcal  R(v)\in \mathcal D_v^{\perp}$ for each $v\in \mathcal U\cap \mathcal D$.

 \begin{theorem}\label{noholonomiccontrains}
 Let $(M,\mathcal L=\tfrac{1}{2}L ,\mathcal  F, \mathcal D)$ be a conic pseudo-Finslerian mechanical system with non-holonomic constraint
 such that $\mathcal D_p$ is a nondegenerate subspace for each $p\in M$. Then  there exists a unique  reaction force $\mathcal  R:\mathcal U\cap \mathcal D \to TM$ satisfying
 the d’Alembert principle.
 \end{theorem}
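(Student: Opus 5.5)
The plan follows the proof of Theorem~\ref{holonomiccontrains}, with the submanifold $N$ replaced by the distribution $\mathcal D$ and $B$ the map defined in Equation~\eqref{fakesegundaforma}. Since each $\mathcal D_p$ is nondegenerate, for every $v\in\mathcal U\cap\mathcal D$ we have the splitting $T_pM=\mathcal D_p\oplus\mathcal D_v^{\perp}$, with projections $P_v$ and $P_v^{\perp}$ depending smoothly on $v$.

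\emph{Uniqueness.} Let $\mathcal R$ be a perfect reaction force, $v\in\mathcal U\cap\mathcal D$, and $\gamma$ a $\mathcal D$-compatible solution of \eqref{EqNG} with $\gamma'(0)=v$.
\begin{enumerate}
\item Extend $\gamma'$ near $\gamma(0)$ to a section $V\in\mathfrak X(\mathcal U\cap\mathcal D)$ with $V\circ\gamma=\gamma'$. This is possible locally: if $\gamma'(0)\neq0$, $\gamma$ is an embedded arc near $0$, and a local frame of $\mathcal D$ lets us extend the coefficients of $\gamma'$ off the arc.
\item Then $D^{\gamma'}_\gamma\gamma'=\nabla^V_VV$ at $\gamma(0)$. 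Hence the $g_v$-orthogonal component of $D^{\gamma'}_\gamma\gamma'(0)$ is $B(v)$, and this is well defined by the lemma following \eqref{fakesegundaforma}.
\item Apply $P_v^{\perp}$ to \eqref{EqNG} at $t=0$. Since $\mathcal R(v)\in\mathcal D_v^{\perp}$, this forces
\[\mathcal R(v)=B(v)-P_v^{\perp}(\mathcal F(v)).\]
\end{enumerate}

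\emph{Existence.} Define $\mathcal R$ by the formula above; it is perfect by construction. Given $v$, we must produce a $\mathcal D$-compatible curve with $\gamma'(0)=v$ solving \eqref{EqNG}.
\begin{enumerate}
\item Choose local coordinates and a local frame $E_1,\dots,E_k$ of $\mathcal D$, and write $\gamma'=\sum_a u_a(t)E_a(\gamma)$. This turns the unknowns into $(\gamma,u)$ with the first-order equation $\gamma'=\sum_a u_aE_a(\gamma)$.
\item Impose $P_{\gamma'}(D^{\gamma'}_\gamma\gamma')=P_{\gamma'}(\mathcal F(\gamma'))$. The left side equals $\sum_a u_a'E_a+P_{\gamma'}\bigl(\sum_{a,b}u_au_b\nabla^{\gamma'}_{E_a}E_b\bigr)$, and $P_{\gamma'}$ is the identity on $\mathcal D$.
\item Step 2 therefore gives an explicit smooth first-order ODE $u'=\Psi(\gamma,u)$ on the open set where $\sum_a u_aE_a\in\mathcal U$. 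Standard existence and uniqueness yields a solution, and $\mathcal D$-compatibility holds by construction.
\item For this solution, $P^{\perp}_{\gamma'}(D^{\gamma'}_\gamma\gamma')=B(\gamma')$ by the extension argument of the uniqueness part. Decompose $D^{\gamma'}_\gamma\gamma'=P+P^{\perp}$ as in Theorem~\ref{holonomiccontrains}:
\[D^{\gamma'}_\gamma\gamma'=P_{\gamma'}(\mathcal F(\gamma'))+B(\gamma')=\mathcal F(\gamma')+\mathcal R(\gamma').\]
\end{enumerate}

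The main obstacle is justifying $P^{\perp}(D^{\gamma'}_\gamma\gamma')=B(\gamma')$ along an arbitrary compatible curve. This needs a local extension $V$ of $\gamma'$ as a section of $\mathcal D$, together with the identification $D^{\gamma'}_\gamma\gamma'=\nabla^{V}_{\gamma'}V$, which holds because $D$ depends only on values along $\gamma$ and $D^{\gamma'}=D^{V}$ there. The well-definedness lemma for $B$ removes any dependence on the choice of extension. I would handle this by the local frame argument above, using that the covariant derivative is tensorial in the direction and depends on $V$ only along the curve.
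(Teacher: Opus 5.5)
Your proposal is correct and follows the same route as the paper. Uniqueness comes from applying $P_v^{\perp}$ to the generalized Newton equation, which gives $\mathcal R(v)=B(v)-P_v^{\perp}(\mathcal F(v))$; existence comes from defining $\mathcal R$ by that formula and solving the $\mathcal D$-compatible ODE $P_{\gamma'}(D^{\gamma'}_\gamma\gamma')=P_{\gamma'}(\mathcal F(\gamma'))$. Your local-frame reduction and your extension argument identifying $P^{\perp}_{\gamma'}(D^{\gamma'}_\gamma\gamma')$ with $B(\gamma')$ along compatible curves supply details the paper leaves implicit.
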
 
 \begin{proof} As in Theorem~\ref{holonomiccontrains}, if there exists a perfect reaction force $\mathcal  R$, then
\begin{equation}\label{eq.D}
    \mathcal R(v)=B_{\mathcal D}(v)-P^{\perp}_{v}(\mathcal  F(v)).
\end{equation} 
This implies the uniqueness.  To prove the existence, define $\mathcal  R$ by Equation~\ref{eq.D}. By construction, $\mathcal  R(v)\in \mathcal D^{\perp}_v$ for each $v\in \mathcal D\cap \mathcal U$. Now note that the Equation~\ref{eq.D} 
 is equivalent to  
$P_{\gamma'}(\nabla_{\gamma'}^{\gamma'}\gamma')=
    P_{\gamma'}(\mathcal  F(\gamma'))$. By the standard theory of ordinary differential equations, there exists  a unique $\mathcal D$-compatible curve $\gamma$ that solves this last equation subject to the initial condition $\gamma'(0)=v$.
\end{proof}

\subsection{Example: Zermelo  Mechanical Systems}\label{examPSonda}

The dynamics of particles moving on a Riemannian manifold $(M,h)$ in the presence of a potential $\phi$ is modeled by the mechanical system 
$\mathcal M_0=(M,\tfrac{1}{2}h,-\nabla \phi),$
where $\nabla\phi$ denotes the $h$-gradient of $\phi$. If $W$ is a vector field with $h(W,W)<1$ that induces an anisotropy on the space, the total energy is given by 
$\mathcal E(v)=\frac{1}{2}Z(v)+\phi\circ \pi(v),$
where $Z$ is the Zermelo metric with navigation data $(h,W)$. The critical points of the action functional coincide with the motions of the conservative Finslerian mechanical system $\mathcal M=(M,\frac{1}{2}Z,-\nabla^v \phi)$. 

By Theorem~\ref{Jacobitheorem}, the motions with total energy $e$ are precisely the geodesics of the Jacobi metric
$Z^J = (\Phi\circ \pi)\, Z,$
where $\Phi(p)=e-\phi(p)$, defined on $M_e = \{p\in M :\Phi(p) > 0\}$. 
By Lemma~\ref{ZemeloConforme}, $Z^J$ is itself a Zermelo metric with navigation data $\Big((\Phi\circ \pi)h,\; \tfrac{1}{\sqrt{\Phi\circ \pi}}\,W\Big)$. Notice that the metric $(\Phi\circ \pi) h$ coincides with the Jacobi metric associated with the conservative pseudo-Finslerian mechanical system $\mathcal M_0$ at the fixed energy level $e$.

If $N$ is a holonomic constraint, then the induced mechanical system on $N$ is conservative with potential $\phi|_N$ and kinetic energy $\tilde {\mathcal Z}$, where $\tilde{\mathcal{Z}}$ is the kinetic energy of the Randers metric with navigation data $\Big( \lambda h|_{TN \times TN}, W^{\top} \Big)$, by Theorem~\ref{holonomiccontrains} and Lemma~\ref{Randersimmersion}. Consequently, the motions with energy $e$ are reparametrizations of the geodesics of the Zermelo metric with navigation data $\Big((\Phi \lambda)h|_{TN \times TN}, \frac{1}{\sqrt{\Phi}}W^{\top} \Big)$. If $\mathcal{D}$ is a non-holonomic constraint, then the motions with energy $e$ are reparametrizations of the geodesics of the sub-Finslerian metric (see \cite{alabdulsada2025sub}) $(\mathcal{D}, Z)$, where $Z_p$ is the Randers-Minkowski metric on $\mathcal{D}_p$ with navigation data 
$\Big((\Phi(p) \lambda(p))h|_{\mathcal{D}_p \times \mathcal{D}_p}, \frac{1}{\sqrt{\Phi}}W^{\top}(p) \Big)$ 
for every $p \in M$.

\end{document}